\documentclass[12pt]{article}
\usepackage{latexsym}
\usepackage{amsmath}
\usepackage{amssymb}
\usepackage{amscd}
\usepackage{array}
\usepackage{comment}
\usepackage[all]{xy}
\usepackage{amsthm}
\usepackage{amsfonts}
\usepackage{enumerate}
\usepackage{hyperref}
\usepackage{mathtools}
\usepackage{stackrel}
\numberwithin{equation}{section}

\newtheorem{theorem}{Theorem}[]
\newtheorem{proposition}[theorem]{Proposition}
\newtheorem{lemma}[theorem]{Lemma}

\theoremstyle{definition}

\newtheorem{examples}[theorem]{Examples}

\newtheorem{remark}[theorem]{Remark}

\newcommand{\Z}{\mathbf Z}

\newcommand{\Hol}{\mathrm{Hol}}
\newcommand{\Norm}{\mathrm{Norm}}

\newcommand{\Ker}{\operatorname{Ker}}
\newcommand{\Hom}{\operatorname{Hom}}

\newcommand{\conj}{\mathrm{conj}}

\newcommand{\GL}{\mathrm{GL}}

\newcommand{\End}{\operatorname{End}}

\newcommand{\Aut}{\operatorname{Aut}}

\newcommand{\Id}{\operatorname{Id}}

\begin{document}

\begin{center}
\Large{\bf Skew braces of size $p^2q^2$. \\ The cyclic Sylow $q$-subgroup case}
		
\vspace{0.4cm}
\Large{Teresa Crespo}

\vspace{0.2cm}
\normalsize{Universitat de Barcelona, Gran Via de les Corts Catalanes 585, 08007, Barcelona (Spain), e-mail:teresa.crespo@ub.edu}

\end{center}

\begin{abstract}
We consider relatively prime integer numbers $m$ and $n$  such that each group of order $mn$ has a normal subgroup of order $m$. We prove that any skew brace of size $mn$ has an ideal of size $m$ and, if the subgroup of order $m$ of its additive group is abelian, a subbrace of size $n$. We give a method to classify skew braces of size $mn$, such that the subgroup of order $m$ of their additive group is abelian, from the classification of braces of size $m$ and the classification of skew braces of size $n$. We apply this result to determine all skew braces of size $p^2q^2$ of non-abelian type, for $p$ and $q$ odd primes, $p<q$, such that their ideal of order $q^2$ is cyclic.

 \noindent
 {\bf Keywords:} Left skew braces, holomorph, regular subgroups.

 \noindent
 {\bf MSC2020:} 16T25, 20B35, 20D20, 20D45.
\end{abstract}

\section{Introduction}

In \cite{R} Rump introduced an algebraic structure called brace to study
set-theoretic solutions of the Yang-Baxter equation. Guarnieri and Vendramin in \cite{GV} gave the more general concept of skew brace. A skew (left) brace is a set $B$ with two operations $\cdot$ and $\circ$ such
that $(B, \cdot)$ and $(B, \circ)$ are groups and the brace relation is satisfied, namely,
$$a\circ(b\cdot c) = a\circ b\cdot a^{-1} \cdot a\circ c,$$
for all $a, b, c \in B$, where $a^{-1}$ denotes the inverse of $a$ with respect to $\cdot$. We call $(B, \cdot$) the additive group and
$(B,\circ)$ the multiplicative group of the skew left brace. We say that $B$ is a brace (or a skew brace of abelian type) if its additive group is abelian. If $(B, \cdot)$ is a group, $(B, \cdot, \cdot)$ is a brace, called trivial brace.

Let $B_1$ and $B_2$ be skew left braces. A map $f : B_1 \to B_2$ is said to be a skew brace morphism if $f(a\cdot b) = f(a)\cdot f(b)$ and $f(a\circ b) = f(a)\circ f(b)$ for all $a,b \in B_1$. If $f$ is bijective, we say that $f$ is an isomorphism. In that case we say that the skew braces $B_1$ and $B_2$ are isomorphic.

In \cite{GV} Guarnieri and Vendramin proved that given a group $N$, there is
a bijective correspondence between isomorphism classes of skew left braces with additive group $N$ and conjugacy classes of regular subgroups of the holomorph $\Hol(N)$ of $N$.

We note that, if $N$ is a group and $G$ a regular subgroup of $\Hol (N)\simeq N \rtimes \Aut N$, then
${\pi_1}_{|G}:G\rightarrow N$
is bijective.

We recall that, for a skew brace $(B,\cdot,\circ)$ and each $a \in B$, we have a bijective map
$\lambda_a: B \rightarrow B$ defined by $\lambda_a(b)=a^{-1}\cdot  a\circ b$ which satisfies $\lambda_a(b\cdot c)=\lambda_a(b)\cdot \lambda_a(c), a\circ b=a \cdot\lambda_a(b),  \lambda_{a\circ b}=\lambda_a \lambda_b$, for any $a,b,c$ in $B$.

We state now precisely the correspondence between skew braces $(B,\cdot,\circ)$ and regular subgroups of the holomorph of $(B,\cdot)$.

\begin{proposition}[\cite{GV} th. 4.2]\label{GV} Let $(B,\cdot,\circ)$ be a skew left brace. Then

$$\{ (a,\lambda_a) \, : \, a \in B \}$$

\noindent
is a regular subgroup of $\Hol(B,\cdot)$, isomorphic to $(B,\circ)$.

Conversely, if $(B,\cdot)$ is a group and $G$ is a regular subgroup of $\Hol(B,\cdot)$, then $B$ is a skew left brace with $(B,\circ) \simeq G$, where

$$a \circ b=a\cdot f(b),$$

\noindent
and $({\pi_1}_{|G})^{-1}(a)=(a,f) \in G$.

These assignments give a bijective correspondence between the set of isomorphism classes of skew left braces $(B,\cdot,\circ)$ and the set of conjugation classes of regular subgroups of $\Hol(B,\cdot)$.
\end{proposition}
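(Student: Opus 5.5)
We prove the three parts in turn, working throughout in $\Hol(B,\cdot)=B\rtimes\Aut(B,\cdot)$ with product $(a,f)(b,g)=(a\cdot f(b),fg)$. Here $B$ acts on itself by $(a,f)\cdot x=a\cdot f(x)$.

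\emph{First part.} Let $G=\{(a,\lambda_a): a\in B\}$.
\begin{itemize}
\item Each $\lambda_a$ is an automorphism of $(B,\cdot)$: it is bijective and satisfies $\lambda_a(b\cdot c)=\lambda_a(b)\cdot\lambda_a(c)$. So $G\subseteq\Hol(B,\cdot)$.
\item Using $a\circ b=a\cdot\lambda_a(b)$ and $\lambda_{a\circ b}=\lambda_a\lambda_b$,
$$(a,\lambda_a)(b,\lambda_b)=(a\cdot\lambda_a(b),\lambda_a\lambda_b)=(a\circ b,\lambda_{a\circ b}).$$
Hence $G$ is closed under products and $a\mapsto(a,\lambda_a)$ is a homomorphism from $(B,\circ)$. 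It is injective, so $G$ is a subgroup isomorphic to $(B,\circ)$. Inverses are $(\bar a,\lambda_{\bar a})$, where $\bar a$ is the $\circ$-inverse of $a$.
\item Regularity: $(a,\lambda_a)\cdot 1=a$, and $\lambda_a(1)=1$. So the map $g\mapsto g\cdot 1$ is a bijection $G\to B$, which is exactly regularity.
\end{itemize}

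\emph{Converse.} Let $G$ be regular. Regularity means $\pi_1|_G$ is bijective, so for each $a$ there is a unique $f_a$ with $(a,f_a)\in G$. Define $a\circ b=a\cdot f_a(b)$.
\begin{itemize}
\item Since $(a,f_a)(b,f_b)=(a\circ b,f_af_b)\in G$, uniqueness gives $f_{a\circ b}=f_af_b$. Thus $\pi_1|_G:G\to(B,\circ)$ is a bijection turning the group product into $\circ$. Hence $(B,\circ)$ is a group isomorphic to $G$.
\item Brace relation: because $f_a$ is a $\cdot$-automorphism,
$$a\circ(b\cdot c)=a\cdot f_a(b)\cdot f_a(c)=(a\cdot f_a(b))\cdot a^{-1}\cdot(a\cdot f_a(c)),$$
which is the required identity.
\end{itemize}

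\emph{The two constructions are mutually inverse.} Starting from a brace, the maps $f_a$ are the $\lambda_a$, so we recover $\circ$. Starting from $G$, the brace has $\lambda_a(b)=a^{-1}\cdot a\circ b=f_a(b)$, so we recover $G$.

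\emph{Bijection on classes.} This is the main step. Here "isomorphism" means isomorphism of skew braces with the same additive group $(B,\cdot)$, so an isomorphism is some $\phi\in\Aut(B,\cdot)$ with $\phi(a\circ b)=\phi(a)\circ'\phi(b)$.
\begin{itemize}
\item \emph{Isomorphism gives conjugacy.} Let $\phi$ be such an isomorphism, and let $\lambda,\lambda'$ be the $\lambda$-maps of $\circ,\circ'$. Then
$$\lambda'_{\phi(a)}(\phi(b))=\phi(a)^{-1}\cdot\phi(a\circ b)=\phi(\lambda_a(b)),$$
so $\lambda'_{\phi(a)}=\phi\lambda_a\phi^{-1}$. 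Therefore
$$(1,\phi)(a,\lambda_a)(1,\phi)^{-1}=(\phi(a),\phi\lambda_a\phi^{-1})=(\phi(a),\lambda'_{\phi(a)}),$$
so $(1,\phi)G(1,\phi)^{-1}=G'$.
\item \emph{Conjugacy gives isomorphism.} Suppose $G'=hGh^{-1}$ with $h\in\Hol$. Write $h=(c,1)(1,\phi)$. The relevant reduction is that we may take $c=1$: conjugation by $(c,1)$ replaces $G$ by another regular subgroup whose brace is isomorphic to the brace of $G$. I would try first to show this by composing $(c,1)$ with the unique element $g=(c',f_{c'})\in G$ satisfying $c'=c$, so that $(c,1)=g\,(1,f_c^{-1})$. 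Since $gGg^{-1}=G$, this gives $(c,1)G(c,1)^{-1}=(1,f_c^{-1})G(1,f_c^{-1})^{-1}$, so $h$ may be replaced by an element of $\Aut(B,\cdot)$.
\item With $h=(1,\phi)$, the computation in the first bullet reverses: $\phi$ is then a brace isomorphism from $\circ$ to $\circ'$.
\end{itemize}

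This matches isomorphism classes of skew braces on $(B,\cdot)$ with conjugacy classes of regular subgroups of $\Hol(B,\cdot)$.
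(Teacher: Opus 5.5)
The paper does not prove this proposition; it quotes it from Guarnieri--Vendramin. Most of your proposal is correct: the first two parts, the mutual inverseness, the direction ``isomorphism $\Rightarrow$ conjugacy'', and the final step with $h=(1,\phi)$. Your reduction to isomorphisms $\phi\in\Aut(B,\cdot)$ between braces on the same additive group is also fine, by transport of structure.

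\textbf{The gap.} It lies in the step where you replace conjugation by $(c,\Id)$ with conjugation by an automorphism ($\Id$ denotes the identity automorphism). From $(c,\Id)=g\,(1,f_c^{-1})$ with $g\in G$ you only get
\[
(c,\Id)G(c,\Id)^{-1}=g\,\bigl[(1,f_c^{-1})G(1,f_c^{-1})^{-1}\bigr]\,g^{-1}.
\]
Here $g$ normalizes $G$, not the conjugate $(1,f_c^{-1})G(1,f_c^{-1})^{-1}$. For the $G$-factor to be absorbed it must stand next to $G$, i.e.\ on the right.

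The equality you assert is in fact false in general. Take the brace of the radical ring $R=t\,\mathbb{F}_p[t]/(t^5)$ with $a\circ b=a+b+ab$, so that $\lambda_a$ is multiplication by $1+a$. In additive notation your $(1,f_c^{-1})$ is $(0,\lambda_c^{-1})$. One computes
\[
(c,\Id)G(c,\Id)^{-1}=\{(a-ac,\lambda_a): a\in R\},\qquad
(0,\lambda_c^{-1})G(0,\lambda_c^{-1})^{-1}=\{((1+c)^{-1}a,\lambda_a): a\in R\}.
\]
Comparing the elements lying over the same point, these sets agree only if $\lambda_{(1-c^2)a}=\lambda_a$ for all $a$. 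That means $c^2aR=0$ for all $a\in R$, which fails for $a=c=t$, since $t^4\neq 0$.

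\textbf{The repair.} Put the $G$-factor on the other side: $(c,\Id)=(1,\psi)\,g'$ with $\psi=f_{c^{-1}}$ and
\[
g'=(c^{-1},f_{c^{-1}})^{-1}=(f_{c^{-1}}^{-1}(c),f_{c^{-1}}^{-1})\in G.
\]
A direct check confirms $(1,f_{c^{-1}})(f_{c^{-1}}^{-1}(c),f_{c^{-1}}^{-1})=(c,\Id)$, hence $(c,\Id)G(c,\Id)^{-1}=(1,f_{c^{-1}})G(1,f_{c^{-1}})^{-1}$.

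Conceptually, $\Aut(B,\cdot)$ is the stabilizer of $1$ in $\Hol(B,\cdot)$ and $G$ is transitive. So $\Hol(B,\cdot)=\Aut(B,\cdot)\,G$, and you can decompose $h$ itself as $h=(1,\psi)g'$ with $g'\in G$. This gives $hGh^{-1}=(1,\psi)G(1,\psi)^{-1}$ directly, with no need to split off $(c,\Id)$ first. This is exactly the action-closedness of $\Aut N$ that the paper invokes from \cite{BNY}, Lemma 2.1, just before Proposition \ref{class}. With this correction, your last bullet finishes the proof.
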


Skew left braces have been classified for sizes $p^2, p^3$, for $p$ a prime number (\cite{Ba2, NPhD, NZ}); $pq$ and $p^2q$, for $p$ and $q$ odd prime numbers (\cite{AB, AB2, AB3, CCD, D}); $2p^2$, for $p$ an odd prime number (\cite{C}); $8p$, for $p$ an odd prime number $\neq 3, 7$ in the abelian type case, (\cite{CGRV1}); for $12p$, for $p$ an odd prime number $\geq 7$ (\cite{CGRV2, CGRV3}) and for size $p^2q^2$, for $p$ and $q$ odd prime numbers in the abelian type case (\cite{C2} and \cite{C3}).

In this paper we consider relatively prime integer numbers $m$ and $n$  such that each group of order $mn$ has a normal subgroup of order $m$.  We prove that each skew brace of size $mn$ such that its additive group has an abelian subgroup of order $m$ may be obtained from a brace of size $m$ and a skew brace of size $n$. We further give a method to classify such skew braces of size $mn$ from the classification of braces of sizes $m$ and skew braces of size $n$ (cf. Theorem \ref{prop7}).  This is a generalization of the result obtained in \cite{CGRV2} in the case in which $m$ is prime and the one obtained in \cite{C2} for braces of size $mn$. We apply our result to describe all skew braces of size  $p^2q^2$, not of abelian type, for $p$ and $q$ odd primes, such that the subgroup of order $q^2$ of their additive group is cyclic.

\section{Skew left braces of size $mn$, for $\gcd(m,n)=1$}\label{method}

In the sequel we adopt the following hypotheses on the integer numbers $m$ and $n$.

\begin{equation}\label{hyp}
\begin{array}{r} \gcd(m,n)=1 \textit{\ and each group of order\ } mn \\ \textit{\ has a normal subgroup of order\ } m.
\end{array}
\end{equation}

Under these hypotheses the Schur-Zassenhaus theorem implies that every group $N$ of order $mn$ is a semidirect product of a normal subgroup $N_1$ of order $m$ and a subgroup $N_2$ of order $n$. Moreover $N_1$ is the unique subgroup of $N$ of order $m$ and every subgroup of $N$ of order $n$ is conjugate to $N_2$.

Let $N=N_1 \rtimes_{\sigma} N_2$, with $N_1$ a group of order $m$, $N_2$ a group of order $n$, with $\gcd(m,n)=1$. If $\varphi$ is an automorphism of $N$, then $\varphi(N_1)=N_1$ and $\varphi(N_2)$ is a subgroup of $N$ conjugate to $N_2$. Let

$$S= \{ \varphi \in \Aut N \, : \, \varphi(N_2)=N_2 \}.$$

\noindent
Clearly $S$ is a subgroup of $\Aut(N)$.
We give now a characterization of $S$ in terms of $\Aut N_1, \Aut N_2$ and the morphism $\sigma:N_2 \rightarrow \Aut N_1$ defining the semidirect product $N_1\rtimes_{\sigma} N_2$.

\begin{proposition}\label{prop} The image of the injective map

$$S \rightarrow \Aut N_1 \times \Aut N_2, \quad \varphi \mapsto (\varphi_{|N_1},\varphi_{|N_2})$$

\noindent
is precisely the set of pairs $(\varphi_1,\varphi_2) \in \Aut N_1 \times \Aut N_2$ such that

\begin{equation}\label{mor}\sigma \varphi_2=\conj_{\varphi_1} \sigma,
\end{equation}

\noindent
where $\conj_{\varphi_1}$ denotes conjugation by $\varphi_1$ in $\Aut N_1$.

\end{proposition}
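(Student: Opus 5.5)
The plan is to work with the explicit description of $N=N_1\rtimes_\sigma N_2$. Every element of $N$ can be written uniquely as $xy$ with $x\in N_1$, $y\in N_2$, and the product is governed by $y x y^{-1}=\sigma(y)(x)$.

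\emph{Injectivity and well-definedness.} Let $\varphi\in S$. Every automorphism of $N$ preserves $N_1$, since $N_1$ is the unique subgroup of order $m$; also $\varphi(N_2)=N_2$ by the definition of $S$. Hence $\varphi_{|N_1}\in\Aut N_1$ and $\varphi_{|N_2}\in\Aut N_2$. The map is injective because $N=N_1N_2$, so $\varphi$ is determined by its two restrictions, and it is clearly a group homomorphism. To see that the image satisfies (\ref{mor}), apply $\varphi$ to the relation $y x y^{-1}=\sigma(y)(x)$. Writing $\varphi_1=\varphi_{|N_1}$ and $\varphi_2=\varphi_{|N_2}$, this gives
$$\varphi_2(y)\,\varphi_1(x)\,\varphi_2(y)^{-1}=\varphi_1(\sigma(y)(x)).$$
The left-hand side equals $\sigma(\varphi_2(y))(\varphi_1(x))$. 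Replacing $x$ by $\varphi_1^{-1}(x)$ yields $\sigma(\varphi_2(y))=\varphi_1\sigma(y)\varphi_1^{-1}$ for all $y$, which is exactly $\sigma\varphi_2=\conj_{\varphi_1}\sigma$.

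\emph{Surjectivity onto the pairs satisfying (\ref{mor}).} Given such a pair $(\varphi_1,\varphi_2)$, define $\varphi(xy)=\varphi_1(x)\varphi_2(y)$. This map is a bijection of $N$, because of the unique factorization and the bijectivity of $\varphi_1$ and $\varphi_2$. It remains to check that it is a homomorphism, and this is the only real computation. We have $(x y)(x'y')=x\,\sigma(y)(x')\,yy'$, so
$$\varphi\big((xy)(x'y')\big)=\varphi_1(x)\,\varphi_1\sigma(y)(x')\,\varphi_2(y)\varphi_2(y').$$
On the other hand,
$$\varphi(xy)\varphi(x'y')=\varphi_1(x)\,\sigma(\varphi_2(y))(\varphi_1(x'))\,\varphi_2(y)\varphi_2(y').$$
These agree precisely because (\ref{mor}) gives $\sigma(\varphi_2(y))\varphi_1=\varphi_1\sigma(y)$. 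Finally, $\varphi$ restricts to $\varphi_1$ on $N_1$ and to $\varphi_2$ on $N_2$, so $\varphi(N_2)=N_2$ and $\varphi\in S$ maps to the given pair.

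No step is a serious obstacle; the only place needing care is keeping the conjugation convention consistent, namely $\conj_{\varphi_1}(\alpha)=\varphi_1\alpha\varphi_1^{-1}$.
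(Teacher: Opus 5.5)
Your proof is correct and follows essentially the paper's argument. The paper computes that the componentwise map $(\varphi_1,\varphi_2)$ on $N_1\rtimes_\sigma N_2$ is a homomorphism if and only if $\sigma(\varphi_2(x_2))\varphi_1=\varphi_1\sigma(x_2)$ for all $x_2\in N_2$, which gives both inclusions at once; you split this into necessity (applying $\varphi$ to $yxy^{-1}=\sigma(y)(x)$) and sufficiency (the same product comparison), and you also spell out injectivity and why restrictions of $\varphi\in S$ are automorphisms, which the paper leaves implicit.
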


\begin{proof} Given $\varphi_1 \in \Aut N_1, \varphi_2 \in \Aut N_2$, we want to determine when the bijective map $(\varphi_1,\varphi_2):N \rightarrow N$ defined by $(\varphi_1,\varphi_2)(x_1,x_2)=(\varphi_1(x_1),\varphi_2(x_2))$, for $x_1 \in N_1, x_2 \in N_2$, is a group morphism. For $x_1, y_1 \in N_1, x_2, y_2 \in N_2$, we have

$$\begin{array}{lll}(\varphi_1,\varphi_2)(x_1,x_2)(\varphi_1,\varphi_2)(y_1,y_2)&=&(\varphi_1(x_1),\varphi_2(x_2))(\varphi_1(y_1),\varphi_2(y_2))
\\&=& (\varphi_1(x_1)\sigma(\varphi_2(x_2))(\varphi_1(y_1)),\varphi_2(x_2)\varphi_2(y_2)).\end{array}$$

\noindent
On the other hand
$$\begin{array}{lll}(\varphi_1,\varphi_2)((x_1,x_2)(y_1,y_2))&=&(\varphi_1,\varphi_2)(x_1\sigma(x_2)(y_1),x_2y_2)\\
&=&(\varphi_1(x_1\sigma(x_2)(y_1)),\varphi_2(x_2y_2)).\end{array}$$

\noindent
Taking into account that $\varphi_1$ and $\varphi_2$ are morphisms, we obtain that $(\varphi_1,\varphi_2)$ is a morphism if and only if

$$\begin{array}{l} \sigma(\varphi_2(x_2))(\varphi_1(y_1))=\varphi_1(\sigma(x_2)(y_1)), \forall y_1 \in N_1, x_2 \in N_2
\\ \Leftrightarrow \sigma(\varphi_2(x_2))=\conj_{\varphi_1}(\sigma(x_2)), \forall x_2 \in N_2 \\
\Leftrightarrow \sigma\varphi_2=\conj_{\varphi_1}\sigma. \end{array}$$

\end{proof}

Let now $\varphi$ be any automorphism of $N$. Since $\varphi(N_2)$ is conjugate to $N_2$, there exists $\alpha \in N_1$ such that $\varphi(N_2)=\alpha N_2 \alpha^{-1}$, hence $\varphi=\conj_{\alpha} (\varphi_1,\varphi_2)$, for some $\varphi_1 \in \Aut N_1, \varphi_2 \in \Aut N_2$ as in Proposition \ref{prop} and where $\alpha$ is determined modulo $\Norm_N(N_2) \cap N_1$. We shall denote the automorphism $\varphi=\conj_{\alpha} (\varphi_1,\varphi_2)$ as a triple $(\alpha,\varphi_1,\varphi_2)$. With this notation,
for $a \in N_1, b \in N_2$, we have

\begin{equation}\label{im}
(\alpha,\varphi_1,\varphi_2)(a,b)=
(\alpha\varphi_1(a)\sigma(\varphi_2(b))(\alpha^{-1}),\varphi_2(b)).
\end{equation}

\noindent
Indeed, $(\alpha,\varphi_1,\varphi_2)(a,b)=\conj_{\alpha} (\varphi_1(a),\varphi_2(b))=(\alpha,e_2)(\varphi_1(a),\varphi_2(b))(\alpha,e_2)^{-1}=(\alpha\varphi_1(a),\varphi_2(b))(\alpha^{-1},e_2)=
(\alpha\varphi_1(a)\sigma(\varphi_2(b))(\alpha^{-1}),\varphi_2(b)),$ where $e_2$ denotes the identity element in $N_2$.

Now, using that $(\varphi_1,\varphi_2)$ satisfies \eqref{mor}, we obtain that the product in $\Aut N$ is given by

\begin{equation}\label{prod}
(\alpha,\varphi_1,\varphi_2)(\beta,\psi_1,\psi_2)=(\alpha\varphi_1(\beta),\varphi_1\psi_1,\varphi_2\psi_2).
\end{equation}

\begin{proposition}\label{ideal}
Let $m$ and $n$ be relatively prime integer numbers such that each group of order $mn$ has a normal subgroup of order $m$. Then each skew brace $B$ of size $mn$ has an ideal $B_1$ of size $m$, hence a quotient skew brace of size $n$. Moreover, for every $x \in B$ and every $y \in B_1$, we have $\lambda_y(x) x^{-1} \in B_1$.
\end{proposition}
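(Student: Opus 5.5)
Let $B$ be a skew brace of size $mn$ and let $B_1$ be the unique subgroup of order $m$ of $(B,\cdot)$; it exists and is characteristic in $(B,\cdot)$ by hypothesis \eqref{hyp} and the Schur--Zassenhaus discussion above. The plan is to check the three conditions defining an ideal: $B_1$ is normal in $(B,\cdot)$, $\lambda_a(B_1)=B_1$ for every $a\in B$, and $B_1$ is normal in $(B,\circ)$. Once $B_1$ is an ideal, the quotient $B/B_1$ is a skew brace of size $n$ by the standard construction.

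\textbf{Normality in $(B,\cdot)$ and $\lambda$-invariance.} $B_1$ is normal in $(B,\cdot)$ because it is the unique subgroup of order $m$. Each $\lambda_a$ is an automorphism of $(B,\cdot)$, so $\lambda_a(B_1)$ is again a subgroup of order $m$; by uniqueness, $\lambda_a(B_1)=B_1$. Consequently $B_1$ is a left ideal, and in particular a subbrace. Indeed, for $x,y\in B_1$ we have $x\circ y=x\cdot\lambda_x(y)\in B_1$, so $B_1$ is also a subgroup of $(B,\circ)$ of order $m$.

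\textbf{Normality in $(B,\circ)$.} The group $(B,\circ)$ also has order $mn$, so hypothesis \eqref{hyp} gives it a unique subgroup of order $m$, which is normal. The subgroup $(B_1,\circ)$ has order $m$, so it is that subgroup and is therefore normal in $(B,\circ)$. This completes the proof that $B_1$ is an ideal. The only point needing care is that the same hypothesis applies to the multiplicative group, and it does, since the hypothesis concerns every group of order $mn$.

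\textbf{The last statement.} For $x\in B$ and $y\in B_1$, write
$$\lambda_y(x)x^{-1}=y^{-1}\cdot(y\circ x)\cdot x^{-1}.$$
Work modulo $B_1$ in $(B,\cdot)$, using the quotient brace $\bar B=B/B_1$. There $\bar y$ is the identity, which is also the identity of $(\bar B,\circ)$, and therefore $\overline{y\circ x}=\bar y\circ\bar x=\bar x$. Hence
$$\overline{\lambda_y(x)x^{-1}}=\bar y^{-1}\,\bar x\,\bar x^{-1}=1,$$
so $\lambda_y(x)x^{-1}\in B_1$.

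Alternatively, one can argue directly. Normality of $B_1$ in $(B,\circ)$ gives $y\circ x=x\circ y'$ for some $y'\in B_1$. Then $y\circ x=x\cdot\lambda_x(y')$ with $\lambda_x(y')\in B_1$. Since $B_1$ is normal in $(B,\cdot)$, $y\circ x\in xB_1=B_1x$, and so $y^{-1}(y\circ x)x^{-1}\in B_1$.

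I expect no real obstacle in this argument. The key step is noticing that the uniqueness of the order-$m$ subgroup applies in both groups $(B,\cdot)$ and $(B,\circ)$.
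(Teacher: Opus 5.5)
Your proof is correct. For the ideal part it matches the paper: uniqueness of the order-$m$ subgroup of $(B,\cdot)$ gives $\lambda_a(B_1)=B_1$ and closure under $\circ$, and the hypothesis applied to $(B,\circ)$ gives normality there.

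For the last assertion you take a different and shorter route. The paper expands $x\circ y\circ\overline{x}=x\lambda_x(y)\lambda_{x\circ y\circ\overline{x}}(x^{-1})$. It then uses $\lambda_x(y)\in B_1$ and normality in $(B,\cdot)$ to extract $\lambda_z(x^{-1})x\in B_1$ with $z=x\circ y\circ\overline{x}$. Finally it must observe that $z$ runs over all of $B_1$ as $y$ does, and substitute $x\mapsto x^{-1}$. You instead use $y\circ x\in x\circ B_1=xB_1=B_1x$, or equivalently that $\bar y$ is trivial in the quotient brace $B/B_1$. This gives $\lambda_y(x)x^{-1}=y^{-1}(y\circ x)x^{-1}\in B_1$ immediately.

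Your version makes clear that the claim is just the general fact $B_1*B\subseteq B_1$, writing $y*x=\lambda_y(x)x^{-1}$. In other words, $\lambda_y$ induces the identity on $B/B_1$ for $y\in B_1$. This holds for any ideal of any skew brace, and uses the hypothesis on $m,n$ only to produce the ideal. The paper's computation is more hands-on, in the style of the explicit holomorph calculations used later, but it hides this structural reason and needs the extra reparametrization step.
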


\begin{proof} Let $(B,\cdot,\circ)$ be a skew brace of size $mn$. By the hypotheses its additive group $(B,\cdot)$ has a characteristic subgroup $B_1$ of order $m$. We have then, for $x, y \in B_1$, $x\circ y=x\lambda_x(y) \in B_1$ and for the inverse $\overline{x}$ of $x$ with respect to $\circ$, $\overline{x}=\lambda_x^{-1}(x^{-1}) \in B_1$. So $B_1$ is also a multiplicative subgroup of $B$ and, by the hypotheses, it is normal in $B$. Since $\lambda_x(B_1)=B_1$ for all $x \in B$, we obtain that $B_1$ is an ideal of $B$ of size $m$ and $B/B_1$ is a skew brace of size $n$.

Let now $x \in B, y \in B_1$. Since $B_1$ is a normal multiplicative subgroup of $B$, we have $x\circ y\circ \overline{x} \in B_1$. We compute this expression.

$$x\circ y\circ \overline{x}=(x \lambda_x(y))\circ \lambda_x^{-1}(x^{-1})=x\lambda_x(y)\lambda_{x \lambda_x(y)}(\lambda_x^{-1}(x^{-1})).$$

\noindent
Since $\lambda_{x \lambda_x(y)}=\lambda_{x \circ y}=\lambda_x\lambda_y$ and $\lambda_x^{-1}=\lambda_{\overline{x}}$, we obtain

$$x\circ y\circ \overline{x}=x\lambda_x(y)\lambda_{x\circ y\circ \overline{x}}(x^{-1}).$$

\noindent
The normality of $B_1$ as a multiplicative subgroup of $B$ is then equivalent to $x\lambda_x(y)\lambda_{x\circ y\circ \overline{x}}(x^{-1}) \in B_1$ for all $x \in B, y \in B_1$. Taking into account that $\lambda_x(y) \in B_1$ and that $B_1$ is a normal additive subgroup of $B$, this condition implies $\lambda_{x\circ y\circ \overline{x}}(x^{-1})x \in B_1$, for all $x \in B, y \in B_1$ and, in turn, $\lambda_y(x)x^{-1} \in B_1$, for all $x \in B, y \in B_1$.
\end{proof}

Given a brace $(B,\cdot,\circ)$, by Proposition \ref{GV},

$$G:=\{ (x,\lambda_x) \, : \, x \in B \}$$

\noindent
is a regular subgroup of $\Hol(B,\cdot)$, isomorphic to $(B,\circ)$. We consider now a brace of size $mn$, with $m$ and $n$ satisfying \eqref{hyp}. For the additive group $(B,\cdot)$ we have $B=B_1\rtimes_{\sigma} B_2$, for $B_1$ a normal additive subgroup of order $m$, $B_2$ an additive subgroup of order $n$ and $\sigma:(B_2,\cdot) \to \Aut (B_1,\cdot)$ a group morphism.

We shall denote the elements in $\Hol(B,\cdot)$ either by $(x,\varphi)$, with $x \in B, \varphi \in \Aut (B,\cdot)$ or, more explicitly, by $(a,b,\alpha,f,g)$, with $x=(a,b) \in B, \varphi=(\alpha,f,g) \in \Aut (B,\cdot)$. We denote by $e_{i}$ the identity element of $B_{i}$, $i=1,2$. The product in $\Hol(B,\cdot)$ is given by $(x,\varphi)(x',\varphi')=(x\varphi(x'),\varphi\varphi')$ and $(x,\varphi)^{-1}=(\varphi^{-1}(x^{-1}),\varphi^{-1})$, which implies

\begin{equation}\label{prodhol}
\begin{array}{l}(a,b,\alpha,f,g)(a',b',\alpha',f',g')\\ \quad =
(a\sigma(b)(\alpha f(a')\sigma(g(b'))(\alpha^{-1})),bg(b'),\alpha f(\alpha'),ff',gg');
\end{array}
\end{equation}

\begin{equation}\label{invhol}
\begin{array}{l}
(a,b,\alpha,f,g)^{-1} \\ \quad =
(f^{-1}(\alpha^{-1}\sigma(b^{-1})(a^{-1}\alpha)),g^{-1}(b^{-1}),f^{-1}(\alpha^{-1}),f^{-1},g^{-1}).
\end{array}
\end{equation}

\noindent
For $(x,\lambda_x) \in G$, we shall write $\lambda_x=(\alpha_x,f_x,g_x)$.

Let $N=N_1\rtimes_{\sigma} N_2$ such that $N_1$ is an abelian group. It is known that the set of automorphisms $\varphi$ of $N$ satisfying $\varphi(x)=x$ for all $x \in N_1$ and $\varphi(x)x^{-1} \in N_1$, for all $x \in N_2$, is in bijection with the set of 1-cocycles from $N_2$ to $N_1$ with the action given by $\sigma$. In the case when $\varphi(N_2)$ is conjugate to $N_2$, then the 1-cocycle corresponding to $\varphi$ is a 1-coboundary.

In the sequel we shall consider skew braces of size $mn$, with $m$ and $n$ satisfying hypotheses \eqref{hyp} with the additional condition that the normal subgroup of order $m$ of the additive group of the skew brace is an abelian group.

\begin{proposition}\label{B2}
Let $m$ and $n$ be relatively prime integer numbers such that each group of order $mn$ has a normal subgroup of order $m$. Let $(B,\cdot,\circ)$ be a skew brace of size $mn$ such that the normal subgroup of order $m$ of $(B,\cdot)$ is an abelian group. Then $B$ has a subbrace of size $n$.
\end{proposition}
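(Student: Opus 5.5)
The plan is to realize the subbrace as a Hall subgroup of the multiplicative group that is at the same time an additive complement to $B_1$. Work in $\Hol(B,\cdot)$ with the regular subgroup $G=\{(x,\lambda_x)\,:\,x\in B\}\simeq (B,\circ)$ of Proposition \ref{GV}.

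\textbf{Step 1: the candidate $H$.} By Proposition \ref{ideal}, $B_1$ is an ideal, so it is a normal subgroup of order $m$ of $(B,\circ)$. Since $\gcd(m,n)=1$, Schur--Zassenhaus gives a subgroup $H\le (B,\circ)$ of order $n$. Write $\widetilde H=\{(h,\lambda_h)\,:\,h\in H\}\le G$.

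\textbf{Step 2: reduce to a fixed complement.} Let $\Omega$ be the set of additive complements of $B_1$ in $(B,\cdot)$. Suppose we find $T\in\Omega$ with $h\lambda_h(T)=T$ for all $h\in H$. Since $e\in T$, we get $h=h\lambda_h(e)\in T$, so $H\subseteq T$. Both have order $n$, hence $H=T$. Then $H$ is an additive subgroup and a multiplicative subgroup, i.e. a subbrace of size $n$. So it suffices to produce a complement $T$ that is stable under the affine maps $(h,\lambda_h)$, $h\in H$.

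\textbf{Step 3: the action on $\Omega$.} The group $\widetilde H$ acts on $B$ by affine maps $x\mapsto h\lambda_h(x)$. It does not directly permute $\Omega$, but it permutes the larger set $\Omega'$ of left cosets $xT$, $x\in B$, $T\in\Omega$. These are exactly the subsets $C$ that meet every class of $B/B_1$ in one point and are cosets of a subgroup. Since $B=B_1T$, every element of $\Omega'$ has the form $aT$ with $a\in B_1$, $T\in\Omega$. Because $B_1$ is abelian and $\gcd(m,n)=1$, all complements are conjugate under $B_1$: $T'=bTb^{-1}$.

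Consider the group $A$ of maps of $B$ generated by left translations by $B_1$ and by conjugations by $B_1$. It is a homomorphic image of $B_1\times B_1$, so it is abelian of order dividing $m^2$. The action of $A$ on $\Omega'$ is transitive. Moreover $A$ is normalized by the affine maps in $\widetilde H$: automorphisms preserve $B_1$, and translations by elements of $B$ normalize $B_1$-translations up to $B_1$-conjugation. So the group $A\widetilde H$ acts on $\Omega'$ with $A$ normal and transitive, and with $|\widetilde H|=n$ coprime to $|A|$.

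The standard coprime-action fixed-point lemma then gives a point $C\in\Omega'$ fixed by a conjugate $\widetilde H^{\,a}$, $a\in A$. In the abelian case this is elementary: the obstruction is a class in $H^1(\widetilde H, \mathrm{Stab}_A)$, which vanishes because the orders are coprime. Replacing $H$ by the corresponding conjugate subgroup of order $n$ of $(B,\circ)$, we may assume $C$ is fixed by $\widetilde H$. Since $e\in H$ acts trivially and $H$ acts with the single orbit $\{h\lambda_h(c)\}$ through any $c\in C$, one shows that we may take $e\in C$. Indeed, $C\cap B_1$ is one point $c_0$, and the $\widetilde H$-orbit of $c_0$ lies in $C$ and maps onto... the point is that we want to translate $C$ back to a subgroup. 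If needed, we adjust by an element of $B_1$ using the transitivity of $B_1$ on the fibre $C\cap B_1$. Then $C=T\in\Omega$, and Step 2 concludes.

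\textbf{Main obstacle.} The delicate step is Step 3. First, one must make precise the set on which $\widetilde H$ acts and check that $A$ is normalized by $\widetilde H$ and acts transitively on it. Second, one must ensure that the fixed coset can be taken to contain $e$, so that it is an honest subgroup. I would first try to do the cocycle computation explicitly with formulas \eqref{im}, \eqref{prodhol}: encode a complement as $\{(\delta(b),b)\,:\,b\in B_2\}$ with $\delta$ a $1$-cocycle $B_2\to B_1$. Then write the condition $h\lambda_h(T)=T$ as a $1$-cocycle condition on $H$ with values in the abelian group $B_1$, which is twisted by the action through $(f_h,\sigma)$. Its vanishing then follows from $\gcd(|H|,|B_1|)=1$. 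Here one uses that $\lambda_y(x)x^{-1}\in B_1$ for $y\in B_1$ (Proposition \ref{ideal}), which makes the $B_1$-part of the action act by coboundaries.
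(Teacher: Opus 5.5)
Everything up to producing an $\widetilde H$-invariant coset $C\in\Omega'$ is sound. It is in fact the paper's argument in coordinate-free form. Your $A$ is $\{(a,\conj_b): a,b\in B_1\}$, which is normal in all of $\Hol(B,\cdot)$. The paper's two coboundary steps produce $\Phi=(c,\conj_z)\in A$ with $G_2=\Phi G_2^0\Phi^{-1}$, i.e. $G_2$ stabilizes the coset $\Phi(B_2)=czB_2z^{-1}$. (Minor point: the obstruction lives in $H^1(\widetilde H,A/\mathrm{Stab}_A)$, not $H^1(\widetilde H,\mathrm{Stab}_A)$; it vanishes by coprimality either way.)

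The gap is the last step, passing from $C$ to a subbrace.

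\textbf{What goes wrong.} Replacing $H$ by ``the conjugate subgroup of $(B,\circ)$ corresponding to $\widetilde H^{a}$'' makes no sense. An element $a\in A$ is in general not in $G$, so $a^{-1}\widetilde H a$ is not of the form $\{(h,\lambda_h)\}$. It is also unnecessary, since $\widetilde H$ itself fixes $a(C)$. More seriously, you cannot ``adjust by an element of $B_1$'' to get $e\in C$. Translating $C$ by $c_0^{-1}$, or moving it by any element of $A$, does not preserve $\widetilde H$-invariance, and the fibre $C\cap B_1$ is a single point.

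\textbf{The target of Step 2 can be unreachable.} By your own argument, an $\widetilde H$-invariant $T\in\Omega$ exists if and only if $H$ itself is an additive subgroup, and a Hall subgroup of $(B,\circ)$ need not be one. Take $p\mid q-1$ with $p$ odd, $u$ of order $p$ in $(\Z/(q))^*$, $(a,b)\cdot(a',b')=(a+u^{b}a',b+b')$ and $(a,b)\circ(a',b')=(a+u^{\ell b}a',b+b')$ with $\ell\not\equiv 0,1\pmod p$. The multiplicative Sylow $p$-subgroups are $\{(c(1-u^{\ell b}),b)\}$ and the additive ones are $\{(d(1-u^{b}),b)\}$. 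Comparing $b=1,2$ shows they never coincide when $c\neq 0$. For the same reason, the cocycle computation in your last paragraph aims at the wrong object. What vanishes is the obstruction to an invariant \emph{coset}, and the translation part $c$ of $\Phi$ is essential.

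\textbf{The missing step.} Conjugate $H$ inside $(B,\circ)$ by a point of $C$. If $h\circ C=C$ for all $h\in H$, then $H$ acts freely on $C$ and $|C|=|H|$, so $C=H\circ c$ for any $c\in C$. Write $C=cT$ with $T\in\Omega$. Then
$$\overline{c}\circ H\circ c=\overline{c}\circ(cT)=\overline{c}\,\lambda_{\overline{c}}(c)\,\lambda_{\overline{c}}(T)=(\overline{c}\circ c)\,\lambda_{\overline{c}}(T)=\lambda_{\overline{c}}(T).$$
This set is both a subgroup of $(B,\circ)$ and an additive subgroup of order $n$, hence a subbrace of size $n$. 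So the correcting element is $(\overline{c},\lambda_{\overline{c}})\in G$, not an element of $A$.

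\textbf{Comparison with the paper.} With this fix, your route genuinely differs from the paper's only in its ending. The paper replaces $G$ by $\Phi^{-1}G\Phi$, passing to an isomorphic skew brace in which the fixed complement $B_2$ is a subbrace. That also yields the normal form $G=G_1\rtimes G_2^0$ used in Theorem \ref{prop7}. You instead stay inside $B$ and exhibit the subbrace explicitly as a $\circ$-conjugate of a Hall subgroup of $(B,\circ)$.
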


\begin{proof} We consider the regular subgroup $G:=\{ (x,\lambda_x) \, : \, x \in B \}$
of $\Hol(B,\cdot)$, corresponding to $B$. By Proposition \ref{ideal}, $G_1:=\{ (x,\lambda_x) \, : \, x \in B_1 \}$ is a normal subgroup of $G$ and $G_1=\{(a,e_2,\alpha_a,f_a,\Id_{B_2}) \, : \, a \in B_1 \}$.  By the hypotheses \eqref{hyp}, $G$ is the semidirect product of $G_1$ and a subgroup $G_2$ of order $n$. Since each element of $G$ is equal to the product of an element in $G_1$ and an element in $G_2$, we can write $G_2$ in the following form

$$G_2=\{ (a_b,b,\alpha_b,f_b,g_b) \, :\, b \in B_2 \}.$$

\noindent
Using \eqref{prodhol} and \eqref{invhol}, we obtain that the subgroup condition for $G_2$ implies the following equalities, for $b,b' \in B_2$.

\begin{align}
\alpha_{bg_{b}(b')} &=\alpha_bf_{b}(\alpha_{b'}) \label{cs1} \\
 a_{bg_{b}(b')} &= a_{b} \sigma(b)(\alpha_b f_{b} (a_{b'}))\sigma(bg_{b}(b'))(\alpha_b^{-1})\label{cs2}\\
 f_{bg_{b}(b')} &= f_bf_{b'} \label{cs3}
\end{align}

We define

$$G_2^0:=\{ (e_1,b,e_1,f_b,g_b) \, :\, b \in B_2 \}.$$

\noindent
By \eqref{prodhol} and \eqref{invhol} and using that $G_2$ is a subgroup of $G$, it is clear that $G_2^0$ is also a subgroup of $G$. We want to see that, up to conjugation, we may assume $G=G_1\rtimes G_2^0$.

First observe that \eqref{cs1} gives that $b \mapsto \alpha_b$ is a 1-cocycle from $B_2$, with the product given by $(b,b') \mapsto bg_{b}(b')$, to $(B_1,\cdot)$ with $B_2$ acting on $(B_1,\cdot)$ by $\alpha^b=f_b(\alpha)$. Hence it is a 1-coboundary, i.e. there exists $z \in B_1$ such that $\alpha_b=zf_b (z^{-1} )$. Let $Z:=(e,\conj_z) \in \Hol(B,\cdot)$. We obtain

$$G_2^1:=ZG_2^0Z^{-1}=\{ (z\sigma(b)(z^{-1}),b,\alpha_b,f_b,g_b) \, :\, b \in B_2 \}.$$

\noindent
Now, $b\mapsto a_b$ and $b \mapsto z\sigma(b)(z^{-1})$ both satisfy \eqref{cs2}, hence $b\mapsto a_b \sigma(b)(z)z^{-1}$ is a 1-cocycle from $B_2$, with the product above, to $(B_1,\cdot)$ with $B_2$ acting on $B_1$ by $a^b=\sigma(b)(f_b(a))$, then a 1-coboundary. Let $c \in B_1$ such that $a_b \sigma(b)(z)z^{-1}=c\sigma(b)(f_b (c^{-1}))$ and $C:=(c,\Id_B) \in \Hol(B,\cdot)$. We obtain $CG_2^1C^{-1}=G_2$. Hence, we may assume $G=G_1\rtimes G_2^0$, as wanted. We note that the conjugate of $G_1$ under CZ may also be written as $\{ (a,e_2,\alpha_a,f_a,\Id) \, : \, a \in B_1\}$.

For $b,b' \in B_2$, we have $b\circ b'=b\lambda_b(b')=bg_b(b') \in B_2$, hence $B_2$ is a subbrace of $B$.

\end{proof}

\begin{remark} With the notations in the proof of Proposition \ref{B2}, we consider now

$$\overline{G_2}=\{ (b,g_b) \, :\, b \in B_2 \} \subset \Hol(B_2,\cdot).$$

\noindent
Clearly, $\overline{G_2}$ is a regular subgroup of $\Hol(B_2,\cdot)$ and, using \eqref{prodhol}, we see that $\overline{G_2}$ is isomorphic to $G_2^0$. The brace corresponding to $\overline{G_2}$ is then isomorphic to $B_2$.

\end{remark}

We specify now the action of $G_2^0$ on $G_1$ corresponding to the semidirect product structure.

\begin{equation}\label{action}
\begin{array}{l}
(e_1,b,e_1,f_b,g_b)(a,e_2,{\alpha}_a,f_a,\Id_{N_2})(e_1,g_b^{-1}(b^{-1}),e_1,f_b^{-1},g_b^{-1})
\\ = (\sigma(b)(f_b(a)),b,f_b({\alpha}_a),f_bf_a,g_b)(e_1,g_b^{-1}(b^{-1}),e_1,f_b^{-1},g_b^{-1})
\\ = (\sigma(b)(f_b(a))\sigma(b)(f_b(\alpha_a)\sigma(b^{-1})(f_b(\alpha_a^{-1}))),e_2,f_b({\alpha}_a),f_bf_af_b^{-1},\Id_{N_2})
\\ = (\sigma(b)(f_b(a\alpha_a))f_b(\alpha_a^{-1}),e_2,f_b({\alpha}_a),f_bf_af_b^{-1},\Id_{N_2}) \end{array}
\end{equation}

\begin{lemma}
We use the notations in the proof of Proposition \ref{B2}. For $b \in B_2$, we define $\tau(b):=\sigma(b) f_b$. Then

\begin{enumerate}[1)]
\item $\tau$ is a group morphism from $(B_2,\circ)$ to $\Aut(B_1,\cdot)$;
\item if $\alpha_a=e_1$, for all $a \in B_1$, $\tau(b) \in \Aut(B_1,\cdot,\circ)$.
\end{enumerate}
\end{lemma}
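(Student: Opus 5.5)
The approach is to work throughout with the normalized form obtained in the proof of Proposition~\ref{B2}, namely $G=G_1\rtimes G_2^0$, where $G_2^0=\{(e_1,b,e_1,f_b,g_b)\,:\,b\in B_2\}$ and $G_1=\{(a,e_2,\alpha_a,f_a,\Id)\,:\,a\in B_1\}$. Both statements should then follow from the multiplication rule \eqref{prodhol}, together with Proposition~\ref{prop} and the conjugation formula \eqref{action}.

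For 1), first note that $\tau(b)=\sigma(b)f_b$ is a composition of two automorphisms of $(B_1,\cdot)$, so it lies in $\Aut(B_1,\cdot)$. To get multiplicativity, multiply two elements of $G_2^0$ using \eqref{prodhol}. The first coordinate stays $e_1$, the $B_2$-coordinate is $bg_b(b')=b\circ b'$, and the automorphism coordinate is $(e_1,f_bf_{b'},g_bg_{b'})$. Since $G_2^0$ is a subgroup, this gives
$$f_{b\circ b'}=f_bf_{b'},\qquad \sigma(b\circ b')=\sigma(b)\sigma(g_b(b')).$$
Hence
$$\tau(b\circ b')=\sigma(b)\sigma(g_b(b'))f_bf_{b'},\qquad \tau(b)\tau(b')=\sigma(b)f_b\sigma(b')f_{b'}.$$
It therefore suffices to prove $\sigma(g_b(b'))=f_b\sigma(b')f_b^{-1}$. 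This holds because $\lambda_b=(e_1,f_b,g_b)$ has trivial $\alpha$-component, so it belongs to $S$. Proposition~\ref{prop} then gives $\sigma g_b=\conj_{f_b}\sigma$, which is exactly the required identity.

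For 2), assume $\alpha_a=e_1$ for all $a\in B_1$. Additivity of $\tau(b)$ is already known from 1), so only the $\circ$-compatibility remains. Formula \eqref{action} specializes to
$$(e_1,b,e_1,f_b,g_b)\,(a,e_2,e_1,f_a,\Id)\,(e_1,b,e_1,f_b,g_b)^{-1}=(\tau(b)(a),e_2,e_1,f_bf_af_b^{-1},\Id).$$
The left-hand side lies in $G_1$ by normality, and elements of $G$ are determined by their first coordinate. So this element is exactly the element of $G_1$ attached to $\tau(b)(a)$, and in particular $f_{\tau(b)(a)}=f_bf_af_b^{-1}$.

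Now consider the bijection $\pi\colon G_1\to B_1$ given by the first coordinate. By Proposition~\ref{GV} it is a group isomorphism $G_1\to(B_1,\circ)$. Conjugation by an element of $G_2^0$ is a group automorphism of $G_1$, and the display above shows that, transported through $\pi$, this automorphism is exactly $a\mapsto\tau(b)(a)$. Hence $\tau(b)(a\circ a')=\tau(b)(a)\circ\tau(b)(a')$, and therefore $\tau(b)\in\Aut(B_1,\cdot,\circ)$.

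The main obstacle is the identification of the transported conjugation with $\tau(b)$. This needs the hypothesis $\alpha_a=e_1$: without it, \eqref{action} contributes the extra factor $f_b(\alpha_a^{-1})\sigma(b)f_b(\alpha_a)$, and the first coordinate is no longer $\tau(b)(a)$. Commutativity of $B_1$ is used to simplify \eqref{action} into the displayed form.
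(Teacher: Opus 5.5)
Your proof is correct. Part 1) is the paper's argument: $f_{b\circ b'}=f_bf_{b'}$ comes from the product in $G_2^0$, and $\sigma(g_b(b'))=f_b\sigma(b')f_b^{-1}$ comes from \eqref{mor} applied to $(f_b,g_b)$.

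For 2) you take a different route. The paper reads off the automorphism coordinate of \eqref{action}, namely $f_{\tau(b)(a)}=f_bf_af_b^{-1}$. It then applies \eqref{mor} to $(f_a,\Id)$ to see that $f_a$ commutes with $\sigma(b)$, which gives $f_{\tau(b)(a)}=\tau(b)f_a\tau(b)^{-1}$. Since $f_a$ is the $\lambda$-map of $B_1$, this is the usual criterion $\lambda_{\varphi(a)}=\varphi\lambda_a\varphi^{-1}$ for an additive automorphism $\varphi$ to preserve $\circ$.

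You instead read off the first coordinate of \eqref{action}. You then use that conjugation by an element of $G_2^0$ restricts to a group automorphism of the normal subgroup $G_1\cong(B_1,\circ)$, and transport it through that isomorphism. Your version needs neither the commutation of $f_a$ with $\sigma(b)$ nor the identification of $f_a$ with $\lambda_a|_{B_1}$. It also shows that the first-coordinate map of \eqref{action} (the map $a\mapsto A_b$ of Theorem~\ref{prop7}) is always an automorphism of $(B_1,\circ)$; the hypothesis $\alpha_a=e_1$ is used only to identify it with $\tau(b)$. The paper's version instead yields the explicit identity $\lambda_{\tau(b)(a)}=\tau(b)\lambda_a\tau(b)^{-1}$, which is exactly the form \eqref{alpha2} takes in Remark~\ref{rem}.

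One small correction: commutativity of $B_1$ plays no role in simplifying \eqref{action} once $\alpha_a=e_1$. The first coordinate is $\sigma(b)(f_b(a))=\tau(b)(a)$ directly.
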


\begin{proof} 1) First we observe that $\tau(b)$ belongs to $\Aut(B_1,\cdot)$, since $\sigma(b)$ and $f_b$ do.
We note that, writing the product of two elements in $G_2$, we obtain that, for $b, b' \in B_2$, we have $f_b f_{b'}=f_{bg_b(b')}=f_{b \circ b'}$. Now

$$\tau (b\circ b')=\sigma(b\circ b') f_{b\circ b'}=\sigma(b)\sigma(g_b(b')) f_{b} f_{b'}=\sigma(b)f_b\sigma(b')f_{b'}=\tau(b)\tau(b'),
$$

\noindent
where, for the third equality, we have applied \eqref{mor} to $(f_b,g_b) \in \Aut (B,\cdot)$.

\noindent
2) If $\alpha_a=e_1$, for all $a \in B_1$, then \eqref{action} gives $f_{\tau(b)(a)}=f_bf_af_b^{-1}$. Since $(f_a,\Id) \in \Aut (B,\cdot)$, for all $a \in B_1$, \eqref{mor} gives that $f_a$ commutes with $\sigma(b)$ for all $b \in B_2$. We have then $f_{\tau(b)(a)}=\tau(b) f_a \tau(b)^{-1}$ which implies $\tau(b) \in \Aut(B_1,\circ)$.

\end{proof}

In the next theorem we prove that a skew brace $(B,\cdot,\circ)$ of size $mn$ is obtained from a skew brace of size $m$ and a skew brace of size $n$ under the hypotheses \eqref{hyp} and the assumption that the normal subgroup of order $m$ of $(B,\cdot)$ is abelian.

\begin{theorem}\label{prop7}
Let $N=N_1\rtimes_{\sigma} N_2$, with $|N_1|=m$, $N_1$ abelian, $|N_2|=n$ and $m, n$ satisfying hypotheses \eqref{hyp}, where $\sigma:N_2 \to \Aut N_1$ is a group morphism. Let $B_1$ be a skew brace with additive group isomorphic to $N_1$, $B_2$ a skew brace with additive group isomorphic to $N_2$  and let $G_1:= \{(a,\lambda_a)\, : \, a\in N_1 \}, G_2:=\{ (b,\lambda_b)\, : \, b \in N_2 \}$ be the corresponding regular subgroups of $\Hol(N_1)$ and $\Hol(N_2)$, respectively. Let $\tau:(B_2,\circ) \to \Aut(B_1,\cdot)$ be a group morphism.  Let us further assume

\begin{align}
\sigma(b) \lambda_a &= \lambda_a \sigma (b), \, \forall a \in N_1, b \in N_2, \label{eq9} \\
\sigma(b \lambda_b(c) b^{-1}) &= \tau(b)\sigma(c) \tau(b)^{-1}, \, \forall b,c \in N_2 \label{eq10}.
\end{align}

Then

$$\widetilde{G}_1:= \{ (a,e_2,\alpha_a,\lambda_a,\Id_{N_2}) \, : \, a \in N_1\}, \, \widetilde{G}_2:= \{ (e_1,b,e_1,\sigma(b)^{-1}\tau(b),\lambda_b) \, : \, b \in N_2\}$$

\noindent
are subgroups of $\Hol(N)$ such that $\widetilde{G}_2$ normalizes $\widetilde{G}_1$ and $G:=\widetilde{G}_1\rtimes \widetilde{G}_2$ is a regular subgroup of $\Hol(N)$, for each map $\alpha: N_1 \to N_1, a \mapsto \alpha_a$ satisfying

\begin{equation}\label{alpha0}
\alpha_{a\circ a'}=\alpha_a\lambda_a(\alpha_{a'}), \, \forall a,a' \in N_1;
\end{equation}

\vspace{0.3cm}
\noindent
and, for $A_b:=\tau(b)(a\alpha_a)\sigma(b)^{-1}\tau(b)(\alpha_a^{-1})$,

\begin{equation}\label{alpha1}
\alpha_{A_b} = \sigma(b)^{-1}\tau(b)(\alpha_a), \, \forall (a,b) \in N;
\end{equation}

\begin{equation}\label{alpha2}
\lambda_{A_b} = \sigma(b)^{-1}\tau(b) \lambda_a \tau(b)^{-1}\sigma(b), \, \forall (a,b) \in N.
\end{equation}

\vspace{0.3cm}
Moreover any skew brace $B$ of order $mn$ such that the normal subgroup of $(B,\cdot)$ of order $m$ is abelian is obtained in this way.
\end{theorem}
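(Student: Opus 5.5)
The plan is to verify the three claims in turn: $\widetilde{G}_1$ is a subgroup, $\widetilde{G}_2$ is a subgroup, and $\widetilde{G}_2$ normalizes $\widetilde{G}_1$. Regularity of $G$ then follows by counting. For the converse we read off the data $(\tau,\alpha,B_1,B_2)$ from the normal form obtained in the proof of Proposition \ref{B2}. Throughout we compute with the product formulas \eqref{prodhol} and \eqref{invhol}, writing elements of $\Hol(N)$ as quintuples $(a,b,\alpha,f,g)$.

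\emph{Step 1: the triples are automorphisms.} First check that $(\alpha_a,\lambda_a,\Id_{N_2})$ and $(e_1,\sigma(b)^{-1}\tau(b),\lambda_b)$ lie in $\Aut N$. By Proposition \ref{prop}, this means checking \eqref{mor}. For the first triple, \eqref{mor} reads $\sigma=\conj_{\lambda_a}\sigma$, which is exactly \eqref{eq9}. For the second, \eqref{mor} reads
$$\sigma(\lambda_b(c))=\sigma(b)^{-1}\tau(b)\,\sigma(c)\,\tau(b)^{-1}\sigma(b).$$
Rewriting $\sigma(b\lambda_b(c)b^{-1})=\sigma(b)\sigma(\lambda_b(c))\sigma(b)^{-1}$, this is exactly \eqref{eq10}.

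\emph{Step 2: the two subgroups.} Multiply two elements of $\widetilde{G}_1$ using \eqref{prodhol}. Because $N_1$ is abelian and the $N_2$-components are trivial, the product is
$$(a\alpha_a\lambda_a(a')\alpha_a^{-1},\,e_2,\,\alpha_a\lambda_a(\alpha_{a'}),\,\lambda_a\lambda_{a'},\,\Id).$$
Its first coordinate is $a\lambda_a(a')=a\circ a'$. By \eqref{alpha0} the third coordinate is $\alpha_{a\circ a'}$, and $\lambda_a\lambda_{a'}=\lambda_{a\circ a'}$. So $\widetilde{G}_1$ is closed, hence a subgroup, and it is isomorphic to $(B_1,\circ)$.

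For $\widetilde{G}_2$ the product is
$$\bigl(e_1,\;b\lambda_b(b'),\;e_1,\;\sigma(b)^{-1}\tau(b)\sigma(b')^{-1}\tau(b'),\;\lambda_b\lambda_{b'}\bigr).$$
We need the fourth entry to equal $\sigma(b\circ b')^{-1}\tau(b\circ b')$. Since $\tau$ is a morphism on $(B_2,\circ)$, $\tau(b\circ b')=\tau(b)\tau(b')$. Also $\sigma(b\circ b')=\sigma(b)\sigma(\lambda_b(b'))$. Substituting the expression for $\sigma(\lambda_b(b'))$ from Step 1 gives the required identity.

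\emph{Step 3: normalization.} Conjugate $(a,e_2,\alpha_a,\lambda_a,\Id)$ by $(e_1,b,e_1,f_b,g_b)$, where $f_b=\sigma(b)^{-1}\tau(b)$, exactly as in \eqref{action}. The first coordinate becomes $\sigma(b)(f_b(a\alpha_a))f_b(\alpha_a^{-1})=A_b$. The remaining coordinates become $f_b(\alpha_a)$, $f_b\lambda_af_b^{-1}$ and $\Id$. Membership in $\widetilde{G}_1$ then means precisely that \eqref{alpha1} and \eqref{alpha2} hold, so these conditions give normalization.

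\emph{Step 4: regularity.} Now $G=\widetilde{G}_1\widetilde{G}_2$ is a subgroup. Its order is $mn$, since the two factors have coprime orders. Projecting to $N$, the element $(a,\dots)(e_1,b,\dots)$ has first components $(a\alpha_a\alpha_a^{-1},b)=(a,b)$, so $\pi_1|_G$ is bijective. Hence $G$ is regular.

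\emph{Step 5: the converse.} Let $B$ be a skew brace of size $mn$ whose normal subgroup of order $m$ is abelian. The proof of Proposition \ref{B2} shows that, up to conjugation, $G=G_1\rtimes G_2^0$ with
$$G_1=\{(a,e_2,\alpha_a,f_a,\Id)\},\qquad G_2^0=\{(e_1,b,e_1,f_b,g_b)\}.$$
Take $\lambda_a:=f_a$, so that $B_1$ is the subbrace of size $m$, and let $B_2$ be the brace given by the preceding Remark, with $\lambda_b:=g_b$. Set $\tau(b):=\sigma(b)f_b$; by the Lemma it is a morphism on $(B_2,\circ)$. Then:
\begin{itemize}
\item \eqref{eq9} and \eqref{eq10} are \eqref{mor} applied to the automorphisms in $G$, read backwards;
\item \eqref{alpha0} is the subgroup property of $G_1$;
\item \eqref{alpha1} and \eqref{alpha2} come from normality via \eqref{action}.
\end{itemize}

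The main obstacle is bookkeeping rather than ideas. The delicate points are checking that $A_b$ matches the first coordinate in \eqref{action}, and the sign and order conventions in deriving \eqref{eq10} from \eqref{mor}; both require careful use of the abelianness of $N_1$.
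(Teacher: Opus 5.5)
\textbf{Step 4 is wrong as written.} Your Steps 1--3 and 5 coincide with the paper's proof and are correct: verifying \eqref{mor} for the two triples, closure of $\widetilde{G}_1$ and $\widetilde{G}_2$, normalization via \eqref{action}, and the converse read off from the normal form in Proposition \ref{B2}.

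The problem is the product $(a,e_2,\alpha_a,\lambda_a,\Id)(e_1,b,e_1,f_b,\lambda_b)$. By \eqref{prodhol}, the automorphism $(\alpha_a,\lambda_a,\Id)=\conj_{\alpha_a}(\lambda_a,\Id)$ sends $(e_1,b)$ to $(\alpha_a\sigma(b)(\alpha_a^{-1}),b)$, not to $(e_1,b)$. So the $N$-component of this product is
\[
\bigl(a\,\alpha_a\,\sigma(b)(\alpha_a^{-1}),\,b\bigr),
\]
not $(a,b)$; you dropped the $\sigma(b)$. In general $\sigma(b)(\alpha_a)\neq\alpha_a$, for instance for $\alpha_a=-a$ with $\sigma$ nontrivial, as in Examples \ref{exa}. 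Bijectivity of $(a,b)\mapsto(a\alpha_a\sigma(b)(\alpha_a^{-1}),b)$ is not evident from this formula. Hence your conclusion that $\pi_1|_G$ is bijective does not follow from your computation.

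\textbf{Two easy repairs.}

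The paper's fix: since $\widetilde{G}_2$ normalizes $\widetilde{G}_1$, you also have $G=\widetilde{G}_2\widetilde{G}_1$. Multiplying in this order, the $N_1$-component is $\sigma(b)(f_b(a))=\tau(b)(a)$. So $G=\{(\tau(b)(a),b,\dots)\}$, and $\{(\tau(b)(a),b)\}=N$ because each $\tau(b)$ is a bijection of $N_1$.

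Alternatively, keep your order and show that the stabilizer of $e$ in $G$ is trivial. The $N_2$-component of $g_1g_2$ is $b$, so $g_1g_2\cdot e=e$ forces $b=e_2$. Then $g_2=1$, because $\tau(e_2)=\Id$ and $\lambda_{e_2}=\Id$. The $N_1$-component then reduces to $a$, which forces $a=e_1$ and $g_1=1$. Trivial stabilizer gives injectivity of $\pi_1|_G$, and together with $|G|=mn$ this gives regularity.
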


\begin{proof}

\noindent
1) Let us prove that $\widetilde{G}_1$ is a subgroup of $\Hol(N)$. Let us note first that \eqref{eq9} gives that $(\lambda_a,\Id) \in \Aut N$.

For $(a,e_2,\alpha_a,\lambda_a,\Id), (a',e_2,\alpha_{a'},\lambda_{a'},\Id) \in \widetilde{G}_1$, we have

    $$\begin{array}{lll}(a,e_2,\alpha_a,\lambda_a,\Id)(a',e_2,\alpha_{a'},\lambda_{a'},\Id)&=&
    (a\alpha_a\lambda_a(a')\alpha_a^{-1},e_2,\alpha_a\lambda_a(\alpha_{a'}),
    \lambda_a\lambda_{a'},\Id)\\ &=&(a\circ a',e_2,\alpha_{a\circ a'},\lambda_{a\circ a'},\Id),\end{array}$$

\noindent
using that $N_1$ is abelian, $\lambda \in \Hom((B_1,\circ),\Aut(B_1,\cdot))$ and $\alpha$ satisfies \eqref{alpha0}. Analogously

$$(a,e_2,\alpha_a,\lambda_a,\Id)^{-1}=(\lambda_a^{-1}(a^{-1}),e_2,\lambda_a^{-1}(\alpha_a^{-1}),\lambda_a^{-1},\Id)=
(\overline{a},e_2,\alpha_{\overline{a}},\lambda_{\overline{a}},\Id).$$

\noindent
2) Let us prove that $\widetilde{G}_2$ is a subgroup of $\Hol(N)$. We check first that $(\sigma(b)^{-1}\tau(b),\lambda_b) \in \Aut(N)$, i.e. that the pair $(\sigma(b)^{-1}\tau(b),\lambda_b)$ fulfils condition \eqref{mor} by using \eqref{eq10} and the fact that $\sigma$ is a group morphism from $N_2$ to $\Aut N_1$.

For $(e_1,b,e_1,\sigma(b)^{-1}\tau(b),\lambda_b), (e_1,b',e_1,\sigma(b')^{-1}\tau(b'),\lambda_{b'}) \in \widetilde{G}_2$, we have

$$\begin{array}{l} (e_1,b,e_1,\sigma(b)^{-1}\tau(b),\lambda_b)(e_1,b',e_1,\sigma(b')^{-1}\tau(b'),\lambda_{b'}) \\=
(e_1,b\lambda_b(b'),e_1,\sigma(b)^{-1}\tau(b)\sigma(b')^{-1}\tau(b'),\lambda_b\lambda_{b'})\\ =
(e_1,b\circ b',e_1,\sigma(b\circ b')^{-1}\tau(b\circ b'),\lambda_{b\circ b'}).\end{array}$$

\noindent
Indeed, since $\tau$ is a group morphism from $(B_2,\circ)$ to $\Aut(B_1,\cdot)$, it remains to prove  $\sigma(b)^{-1}\tau(b)\sigma(b')^{-1}=\sigma(b\circ b')^{-1} \tau(b)$ which follows from \eqref{eq10}. Analogously

$$\begin{array}{lll}(e_1,b,e_1,\sigma(b)^{-1}\tau(b),\lambda_b)^{-1}&=&(e_1,\lambda_b^{-1}(b^{-1}),e_1,\tau(b)^{-1}\sigma(b),\lambda_b^{-1})
\\ &=& (e_1,\overline{b},e_1,\sigma(\overline{b})^{-1}\tau(\overline{b}),\lambda_{\overline{b}}),\end{array}$$

\noindent
where the equality $\tau(b)^{-1}\sigma(b)=\sigma(\overline{b})^{-1}\tau(\overline{b})$ follows from \eqref{eq10}.

\noindent
3) Let us prove that $\widetilde{G}_2$ normalizes $\widetilde{G}_1$.

For $(e_1,b,e_1,\sigma(b)^{-1}\tau(b),\lambda_b) \in \widetilde{G}_2, (a,e_2,\alpha_a,\lambda_a,\Id) \in \widetilde{G}_1$, we have

$$\begin{array}{l}(e_1,b,e_1,\sigma(b)^{-1}\tau(b),\lambda_b) (a,e_2,\alpha_a,\lambda_a,\Id) (e_1,b,e_1,\sigma(b)^{-1}\tau(b),\lambda_b)^{-1}
\\=(e_1,b,e_1,\sigma(b)^{-1}\tau(b),\lambda_b) (a,e_2,\alpha_a,\lambda_a,\Id) (e_1,\lambda_b^{-1}(b^{-1}),e_1,\tau(b)^{-1}\sigma(b),\lambda_b^{-1})
\\=(\tau(b)(a\alpha_a)\sigma(b)^{-1}\tau(b)(\alpha_a^{-1}),e_2,\sigma(b)^{-1}\tau(b)(\alpha_a),\sigma(b^{-1})\tau(b) \lambda_a \tau(b)^{-1}\sigma(b),\Id)
\\=(A_b,e_2,\alpha_{A_b},\lambda_{A_b},\Id),
\end{array}
$$

\noindent
by using \eqref{alpha1} and \eqref{alpha2}.

\noindent
4) Let us prove that $G$ is a regular subgroup of $\Hol(N)$. Since

$$\begin{array}{l}(e_1,b,e_1,\sigma(b)^{-1}\tau(b),\lambda_b) (a,e_2,\alpha_a,\lambda_a,\Id)\\=(\tau(b)(a),b,\sigma(b)^{-1}\tau(b)(\alpha_a),\sigma(b)^{-1}\tau(b)\lambda_a,\lambda_b)
\end{array},$$

\noindent
we have $G=\{(\tau(b)(a),b,\sigma(b)^{-1}\tau(b)(\alpha_a),\sigma(b)^{-1}\tau(b)\lambda_a,\lambda_b):(a,b) \in N\}$. To see that $G$ is regular, it is enough to prove $\{(\tau(b)(a),b) \, : \, (a,b) \in N \}=N$, which is clear since, for $(a,b) \in N$, we have $(a,b)=(\tau(b)(\tau(b)^{-1}(a)),b)$.

\noindent
 5) Finally let us see that the conditions imposed to obtain the regular subgroup $G$ of $\Hol(N)$ are satisfied for any skew brace $B$ of order $mn$ such that the normal subgroup of $(B,\cdot)$ of order $m$ is abelian. With the notations in Proposition \ref{B2}, the fact that the pairs $(f_a,\Id_{B_2})$ and $(f_b,g_b)$ satisfy \eqref{mor} implies conditions \eqref{eq9} and \eqref{eq10}. Assuming $N_1$ abelian, the formula for the product of two elements in $G_1$, gives that $\alpha$ satisfies \eqref{alpha0}. Taking into account \eqref{action}, we obtain conditions \eqref{alpha1} and \eqref{alpha2}.
\end{proof}

\begin{remark}\label{rem}
With the hypotheses of Theorem \ref{prop7}, the trivial endomorphism $\alpha$ defined by $\alpha_a=e_1$ for all $a \in N_1$ satisfies \eqref{alpha0} and \eqref{alpha1}. Taking into account \eqref{eq9}, \eqref{alpha2} reduces to $\lambda_{\tau(b)(a)}=\tau(b) \lambda_a \tau(b)^{-1}, \, \forall (a,b) \in N$, which is fulfilled if $\tau(b) \in \Aut(B_1,\cdot,\circ)$. In this case, the brace corresponding to the regular subgroup $G$ of $\Hol(N)$ is the twofold semidirect product of
$B_1$ and $B_2$ via $\sigma$ and $\tau$ as defined in \cite{CGRV3} Definition 2.2, since \eqref{eq9} and \eqref{eq10} imply condition (2) in Proposition 2.1 of \cite{CGRV3}.
\end{remark}

\begin{examples}\label{exa}

Note that, if $B_1$ is a trivial brace, then \eqref{alpha0} gives that $\alpha$ is an endomorphism of $B_1$.

1) Assume $B_1$ is the trivial brace $\Z/(p)$. In this case, $\alpha$ is either trivial or an automorphism. In the second case, writing $\Z/(p)$ additively and identifying $\Aut \Z/(p)$ with $(\Z/(p))^*$, we obtain $A_b=\tau(b)a+\tau(b)(1-\sigma(b)^{-1}) \alpha_a$. Next \eqref{alpha1} implies $A_b=\sigma(b)^{-1}\tau(b)a$. Assuming $\sigma$ nontrivial, we obtain $\alpha_a=-a$. Since $B_1$ is trivial, we have $\lambda_a=\Id_{B_1}$ for all $a \in B_1$, hence \eqref{alpha2} is also satisfied for any $\alpha$. We recover the result obtained in \cite{CGRV3}. We note that for the results obtained there in Section 2 the hypothesis $p\nmid \theta(n)$ is not necessary.

2) Assume $B_1$ is the cyclic trivial brace of size $p^r$. Again, since $B_1$ is trivial, \eqref{alpha2} is  satisfied for any $\alpha$. Assume $\sigma$ nontrivial. If $\alpha$ is an automorphism, we obtain $\alpha_a=-a$, proceeding as in 1). If $\alpha$ is neither trivial nor an automorphism, we obtain that $\tau(b)(1-\sigma(b)^{-1}) (a+\alpha_a)$ is equal to a multiple of $p$. Since $p$ does not divide the order of $B_2$, we obtain that $1-\sigma(b)^{-1}$ cannot be a multiple of $p$ for all $b$, hence $p$ divides $a+\alpha_a$, for all $a$. In particular, $p$ should divide $1+\alpha_1$ but $p$ divides $\alpha_1$, hence there is no $\alpha$ with image of order $p^s, 1\leq s \leq r-1,$ satisfying \eqref{alpha1}.

\end{examples}

We want to see now when the regular subgroups $G, G'$ of $\Hol(N)$ corresponding to two quadruples $(B_1,B_2,\tau,\alpha)$ and $(B_1',B_2',\tau',\alpha')$ as in Theorem \ref{prop7} are conjugate. In \cite{BNY} Lemma 2.1, it is proved that $\Aut(N)$, as a subgroup of $\Hol(N)$, is action-closed with respect to the conjugation action of $\Hol(N)$ on
the set of regular subgroups of $\Hol(N)$. Hence it is enough to consider conjugation by elements of $\Aut N$. For

\begin{align*} (X,\Psi)&:=(a,b,\sigma(b)^{-1}\tau(b)(\alpha_{\tau(b)^{-1}(a)}),\sigma(b)^{-1}\tau(b)\lambda_{\tau(b)^{-1}(a)},\lambda_b) \in G,\\
 \Phi&:=(\beta,\varphi,\psi) \in \Aut N,\end{align*}

\noindent
 we have $\Phi(X,\Psi)\Phi^{-1}=(\Phi(X),\Phi \Psi\Phi^{-1})$. By calculation, we obtain

$$\Phi(X)=(\beta\varphi(a)\sigma(\psi(b))(\beta^{-1}),\psi(b)),$$

$$\Phi \Psi\Phi^{-1}=(\beta\varphi\sigma(b)^{-1}\tau(b)(\alpha_{\tau(b)^{-1}(a)}\lambda_{\tau(b)^{-1}(a)}\varphi^{-1}(\beta^{-1})),
\varphi\sigma(b)^{-1}\tau(b)\lambda_{\tau(b)^{-1}(a)}\varphi^{-1},\psi\lambda_b\psi^{-1}).$$

\noindent
We deduce $\psi\lambda_b\psi^{-1}=\lambda_{\psi(b)}$, equivalently $\psi \in \Aut(B_2,\cdot,\circ)$, and $(\psi(b),\lambda_{\psi(b)})$ is conjugate of $(b,\lambda_b)$ by $\psi$ in $\Hol(N_2)$. For $X \in \widetilde{G}_1$, i.e. $b=e_2, \lambda_b=\Id$, we obtain

$$\Phi(X)=(\varphi(a),e_2),\Phi \Psi\Phi^{-1}=(\beta\varphi(\alpha_a\lambda_a\varphi^{-1}(\beta^{-1})),\varphi\lambda_a\varphi^{-1},\Id).$$

\noindent
Since $\widetilde{G}_1$ is a characteristic subgroup of $G$, we deduce $\varphi\lambda_a\varphi^{-1}=\lambda_{\varphi(a)}$, equivalently $\varphi \in \Aut(B_1,\cdot,\circ)$ and $(\varphi(a),\lambda_{\varphi(a)})$ is conjugate of $(a,\lambda_a)$ by $\varphi$ in $\Hol(N_1)$. Moreover

\begin{equation}\label{alphas}
\alpha_{\varphi(a)}'=\beta\varphi(\alpha_a)\lambda_{\varphi(a)}(\beta^{-1}).
\end{equation}

Now

\begin{align*}\varphi\sigma(b)^{-1}\tau(b)\lambda_{\tau(b)^{-1}(a)}\varphi^{-1}&=
\varphi\sigma(b)^{-1}\tau(b)\varphi^{-1}\lambda_{\varphi(\tau(b)^{-1}(a))}\\
&=\sigma(\psi(b)^{-1})\varphi\tau(b)\varphi^{-1}\lambda_{\varphi(\tau(b)^{-1}(a))}
\end{align*}

\noindent
using $\varphi \in \Aut(B_1,\circ)$ and that $(\varphi,\psi)$ satisfies \eqref{mor}. We obtain then

\begin{equation}\label{taus}
\varphi\tau(b)\varphi^{-1}=\tau'(\psi(b))
\end{equation}

\noindent
and, finally,

$$\varphi\sigma(b)^{-1}\tau(b)\lambda_{\tau(b)^{-1}(a)}\varphi^{-1}=\sigma(\psi(b)^{-1})\tau'(\psi(b))\lambda_{\tau'(\psi(b))^{-1}(\varphi (a))}.$$

We have obtained the following result.

\begin{proposition}\label{class}
Two quadruples $(B_1,B_2,\tau,\alpha)$ and $(B_1',B_2',\tau',\alpha')$ as in Theorem \ref{prop7} provide isomorphic braces of size $mn$ if and only if there exist $(\beta,\varphi,\psi) \in \Aut N$ such that $\varphi$ is an isomorphism from $B_1$ to $B_1'$, $\psi$ is an isomorphism from $B_2$ to $B_2'$ and \eqref{alphas} and \eqref{taus} are satisfied.
\end{proposition}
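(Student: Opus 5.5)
The argument runs through the Guarnieri--Vendramin correspondence (Proposition \ref{GV}). The braces attached to the quadruples $(B_1,B_2,\tau,\alpha)$ and $(B_1',B_2',\tau',\alpha')$ are isomorphic exactly when the associated regular subgroups $G=\widetilde{G}_1\rtimes\widetilde{G}_2$ and $G'=\widetilde{G}_1'\rtimes\widetilde{G}_2'$ of $\Hol(N)$ are conjugate. By \cite{BNY} Lemma 2.1 we may restrict to conjugation by elements $\Phi=(\beta,\varphi,\psi)\in\Aut N$, written as triples as in \eqref{im} and \eqref{prod}. The proof is then a two-way comparison of $\Phi G\Phi^{-1}$ with $G'$, using the explicit description
$$G=\{(\tau(b)(a),b,\sigma(b)^{-1}\tau(b)(\alpha_a),\sigma(b)^{-1}\tau(b)\lambda_a,\lambda_b)\}$$
obtained in step 4) of the proof of Theorem \ref{prop7}.

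\emph{Necessity.} Assume $\Phi G\Phi^{-1}=G'$ and use the computation of $\Phi(X)$ and $\Phi\Psi\Phi^{-1}$ given just before the statement.
\begin{enumerate}[1)]
\item Comparing the $\Aut N_2$-components gives $\psi\lambda_b\psi^{-1}=\lambda'_{\psi(b)}$, so $\psi$ is an isomorphism $B_2\to B_2'$.
\item The subgroup $\widetilde{G}_1$ is the unique subgroup of $G$ of order $m$, and $\Phi$ preserves $N_1$. Hence conjugation sends $\widetilde{G}_1$ onto $\widetilde{G}_1'$.
\item Restricting to $b=e_2$ and comparing components then gives $\varphi\lambda_a\varphi^{-1}=\lambda'_{\varphi(a)}$, so $\varphi$ is an isomorphism $B_1\to B_1'$. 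The same comparison gives the $\alpha$-relation \eqref{alphas}.
\item For general $b$, the $\Aut N_1$-component can be rewritten using \eqref{mor} for $(\varphi,\psi)$ together with the fact that $\varphi$ intertwines the $\lambda$'s. Matching it with the corresponding component of the element of $G'$ lying over $\Phi(X)$ forces \eqref{taus}.
\end{enumerate}

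\emph{Sufficiency.} Given $(\beta,\varphi,\psi)$ with these properties, I would check directly that $\Phi$ conjugates each generator of $G$ into $G'$.
\begin{enumerate}[1)]
\item The $N$-component of the conjugate is $\Phi(X)$. The $\Aut N_2$-component is $\lambda'_{\psi(b)}$ because $\psi$ is a brace isomorphism.
\item For the $\Aut N_1$-part $\varphi\sigma(b)^{-1}\tau(b)\lambda_{\tau(b)^{-1}(a)}\varphi^{-1}$, the chain of equalities displayed before the statement, combined with \eqref{taus}, turns it into $\sigma(\psi(b))^{-1}\tau'(\psi(b))\lambda'_{\tau'(\psi(b))^{-1}(a')}$. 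Here $a'$ is the $N_1$-coordinate determined by $\Phi(X)$.
\item For the $\alpha$-component, apply \eqref{alphas} to $\tau(b)^{-1}(a)$ and then \eqref{taus}.
\end{enumerate}
Since both subgroups have order $mn$, the inclusion $\Phi G\Phi^{-1}\subseteq G'$ gives equality.

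\emph{Main obstacle.} The hard part is the bookkeeping of the $N_1$-coordinate and the $\alpha$-coordinate for general $b$. The factor $\sigma(\psi(b))(\beta^{-1})$ in $\Phi(X)$ shifts the $N_1$-coordinate. One must check that, after this shift, the element of $G'$ lying over $\Phi(X)$ has exactly the $\alpha'$-value produced by conjugation. I would handle this by exploiting that $G=\widetilde{G}_1\widetilde{G}_2$. It then suffices to verify the conjugation on generators of $\widetilde{G}_1$, where the claim is \eqref{alphas} directly, and on generators of $\widetilde{G}_2$. For $\widetilde{G}_2$, the conjugate of $(e_1,b,e_1,\sigma(b)^{-1}\tau(b),\lambda_b)$ has $N$-component $(\beta\sigma(\psi(b))(\beta^{-1}),\psi(b))$, and it must be matched with an element of $G'$. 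That match is governed by \eqref{alphas} applied through the cocycle condition \eqref{alpha0} and \eqref{alpha1}, and by \eqref{taus}. This is the step I expect to require the most care, and possibly an implicit compatibility on $\beta$ that follows from \eqref{alphas}.
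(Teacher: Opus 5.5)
Your necessity argument follows the paper's computation. The sufficiency step fails, however: the given $\Phi=(\beta,\varphi,\psi)$ need not conjugate $G$ onto $G'$, and \eqref{alphas} imposes no compatibility on $\beta$ that would repair this. Using \eqref{mor} and \eqref{taus}, $\Phi$ sends the generator $(e_1,b,e_1,\sigma(b)^{-1}\tau(b),\lambda_b)$ of $\widetilde G_2$ to
\[(\beta\sigma(b')(\beta^{-1}),\,b',\,\beta\,\sigma(b')^{-1}\tau'(b')(\beta^{-1}),\,\sigma(b')^{-1}\tau'(b'),\,\lambda'_{b'}),\qquad b'=\psi(b).\]
The element of $G'$ over the same point of $N$ has $\Aut N_1$-component $\sigma(b')^{-1}\tau'(b')\lambda'_{c_b}$ and $\conj$-component $\sigma(b')^{-1}\tau'(b')(\alpha'_{c_b})$, where $c_b=\tau'(b')^{-1}(\beta\sigma(b')(\beta^{-1}))$. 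So membership in $G'$ requires two things: $\lambda'_{c_b}=\Id$, and $\sigma(b')^{-1}\tau'(b')(\alpha'_{c_b})=\beta\,\sigma(b')^{-1}\tau'(b')(\beta^{-1})$ modulo $\Norm_N(N_2)\cap N_1$. Neither follows from the hypotheses.

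Here is a concrete counterexample in the paper's own setting. Take $B_1=\Z/(q^2)$ and $B_2=\Z/(p^2)$ both trivial, $\alpha=\alpha'\equiv e_1$, $\tau=\tau'$ trivial, $\varphi=\Id$, $\psi=\Id$, and any $\beta\neq e_1$.
\begin{enumerate}
\item[(i)] Since $\lambda'\equiv\Id$, \eqref{alphas} reduces to $e_1=e_1$, so every hypothesis holds.
\item[(ii)] The conjugate of $(e_1,b,e_1,\sigma(b)^{-1},\Id)$ has $\conj$-component $\beta\,\sigma(b)^{-1}(\beta^{-1})$. Additively this is $(1-u^{-b})\beta$, which is nonzero for $p\nmid b$ because $1-u^{-b}$ is a unit mod $q^2$. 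Since $\sigma$ has no nonzero fixed points, this component is well defined.
\item[(iii)] Every element of $G'=G$ has trivial $\conj$-component.
\end{enumerate}
Thus $\Phi G\Phi^{-1}\neq G'$, even though $G'=G$.

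The fix is to change the conjugating element rather than look for a constraint on $\beta$.
\begin{enumerate}
\item[(i)] By the isomorphism conditions, \eqref{mor} and \eqref{taus}, $(e_1,\varphi,\psi)$ carries $\widetilde G_2$ onto $\widetilde G_2'$, and carries $\widetilde G_1$ onto the analogous group built from $\varphi\alpha\varphi^{-1}$.
\item[(ii)] Then $\conj_\beta$ carries that group onto $\widetilde G_1'$ (this is exactly \eqref{alphas}), and carries $\widetilde G_2'$ onto $H:=\conj_\beta\widetilde G_2'\conj_\beta^{-1}$. Hence $\Phi G\Phi^{-1}=\widetilde G_1'H$.
\item[(iii)] Both $H$ and $\widetilde G_2'$ normalize $\widetilde G_1'$. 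They have the same image under the homomorphism $\pi(x,\Theta)=(xN_1,\Theta_{|N_1},\overline{\Theta})$, whose kernel $\{(x,\conj_\gamma):x,\gamma\in N_1\}$ has order prime to $n$.
\item[(iv)] Let $\mathcal N:=\Norm_{\pi^{-1}(\pi(G'))}(\widetilde G_1')$. Then $H$ and $\widetilde G_2'$ are both complements in $\mathcal N$ to the normal subgroup $\mathcal N\cap\pi^{-1}(\pi(\widetilde G_1'))$, whose order is prime to $n$.
\item[(v)] Schur--Zassenhaus gives $\nu\in\mathcal N$ with $\nu\widetilde G_2'\nu^{-1}=H$, hence $\nu G'\nu^{-1}=\Phi G\Phi^{-1}$.
\end{enumerate}

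The same $\beta$-shift also affects your necessity step 4, exactly as in the paper's display. There $\lambda'$ is evaluated at $\tau'(\psi(b))^{-1}(\varphi(a))$, rather than at $\tau'(\psi(b))^{-1}$ of the $N_1$-coordinate of $\Phi(X)$. Matching at $a=e_1$ only gives $\varphi\tau(b)\varphi^{-1}=\tau'(\psi(b))\lambda'_{c_b}$, which is \eqref{taus} exactly when $\lambda'_{c_b}=\Id$. That holds automatically if $B_1'$ is trivial. For a nontrivial $B_1'$, however (e.g.\ the nontrivial brace on $\Z/(q)\times\Z/(q)$), a conjugating $\Phi$ can violate \eqref{taus}. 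So you must produce a different triple satisfying it rather than read it off from $\Phi$.
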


\section{Braces of size $p^2$, for $p$ an odd prime number}\label{Bach}

In \cite{Ba2} Bachiller obtained the classification of braces of sizes $p^2$ and $p^3$, up to isomorphism, for $p$ a prime number. We recall it for braces $(B,+,\circ)$ of size $p^2$, for $p$ odd. We note that in this case $(B,\circ)$ is isomorphic to $(B,+)$. For each brace, we give the group of brace automorphisms and the subgroup $\Lambda$ of $\Aut(B,+)$ defined by $\Lambda=\{\varphi \in \Aut(B,+) : \lambda_a\varphi=\varphi\lambda_a , \forall a \in B \}$.

\subsection{$(B,+) \simeq \Z/(p^2)$}\label{cyclic}

There are two braces, up to isomorphism, with additive group isomorphic to $\Z/(p^2)$, the trivial one and a brace with $\circ$ defined by

$$a \circ b = a+ b +pab.$$

\noindent
Since $\Aut (\Z/(p^2))\simeq (\Z/(p^2))^*$ is abelian, in both cases, we have $\Lambda=\Aut (\Z/(p^2))$. In the trivial case, we have $\Aut B= \Aut (\Z/(p^2))$. In the nontrivial case, we have

$$\Aut B= \{ k \in (\Z/(p^2))^* \, : \, k \equiv 1 \pmod{p} \}.$$

We note that, whereas in the trivial case we have a unique brace structure, in the nontrivial one, there are $p-1$ braces $B_k$,  $k \in \{1,\dots, p-1 \}$ (isomorphic to each other), whose multiplicative law $\circ_k$ is obtained by transport of structure, i.e. $a\circ_k b=k^{-1}(ka\circ kb)$, which gives

$$a \circ_k b = a+ b +kpab.$$

\noindent
We have $\Aut B_k=\Aut B$, for all $k \in \{1,\dots, p-1 \}$. An isomorphism from $(B_k,\circ_k)$ into $\Z/(p^2)$ is given by $a \mapsto ka-pka(ka-1)/2$.

%In the nontrivial case, using the isomorphism from $(B,\circ)$ into $\Z/(p^2)$ given by $n\mapsto n-pn(n-1)/2$, we obtain %$\End(B,\circ)=\{\alpha_k \, : \, k \in \Z/(p^2)\}$ with $\alpha_k$ defined by $\alpha_k(n)=kn+pk(k-1)n^2/2$.

\subsection{$(B,+) \simeq \Z/(p)\times \Z/(p)$}\label{elem}

We write the elements in $\Z/(p)\times \Z/(p)$ in vector form. There are two braces, up to isomorphism, with additive group isomorphic to $\Z/(p)\times \Z/(p)$, the trivial one and a brace with $\circ$ defined by

$$\left( \begin{matrix} a_1\\ a_2 \end{matrix} \right) \circ \left(\begin{matrix} b_1 \\ b_2 \end{matrix} \right)=\left(\begin{matrix} a_1+b_1+a_2b_2\\ a_2+b_2\end{matrix} \right) = \left( \begin{matrix} a_1\\ a_2 \end{matrix} \right) + \left(\begin{matrix} b_1 \\ b_2 \end{matrix} \right) + a_2b_2 \left( \begin{matrix} 1 \\ 0 \end{matrix} \right) .$$

\noindent
In the trivial case, we have $\Lambda=\Aut B= \Aut (\Z/(p)\times \Z/(p))\simeq \GL(2,p)$. In the nontrivial case, we have

$$\Aut B= \left\{ \left( \begin{array}{cc} d^2 & b \\ 0 & d \end{array} \right) \, : \, b \in \Z/(p), d \in (\Z/(p))^*\right\},$$

$$\Lambda= \left\{ \left( \begin{array}{cc} d & b \\ 0 & d \end{array} \right) \, : \, b \in \Z/(p), d \in (\Z/(p))^*\right\}.$$

We note that, whereas in the trivial case we have a unique brace structure, in the nontrivial one, we obtain braces $B_M$ (isomorphic to each other), by transport of structure, i.e. $a\circ_M b=M^{-1}(Ma\circ Mb)$,  for $M \in \GL(2,p)$. More precisely, for $M=\left(\begin{smallmatrix} m_{11} & m_{12} \\ m_{21} & m_{22}\end{smallmatrix} \right)$ running over a transversal $T$ of $\Aut B$ in $\GL(2,p)$, we obtain $p^2-1$ different braces whose multiplicative law $\circ_M$ is given by

$$\left( \begin{matrix} a_1\\ a_2 \end{matrix} \right) \circ_M \left(\begin{matrix} b_1 \\ b_2 \end{matrix} \right)=\left(\begin{matrix} a_1+b_1\\ a_2+b_2\end{matrix} \right)+\dfrac 1 {\det M} \left(\begin{matrix} m_{22}(m_{21} a_1+m_{22} a_2)(m_{21} b_1+m_{22} b_2)\\  -m_{21}(m_{21} a_1+m_{22} a_2)(m_{21} b_1+m_{22} b_2) \end{matrix} \right).$$

\noindent
We have $\Aut B_M=M^{-1} (\Aut B) M$, for all $M \in T$. Writing $v:=\left(\begin{smallmatrix} 1 \\ 0 \end{smallmatrix}\right)$, we obtain

$$a\circ_M b=a+b+\theta(a)\theta(b)\, w,$$

\noindent
where $\theta$ denotes the linear form given by the second row of $M$, i.e. $\theta(a)=m_{21}a_1+m_{22}a_2$, and $w:=\frac 1{\det M}\left(\begin{smallmatrix} m_{22}\\ -m_{21}\end{smallmatrix}\right)=M^{-1}v$. We have $\theta(w)=(MM^{-1}v)_2=0$, i.e. $w$ is a nonzero vector of $\Ker \theta$.

\section{Skew left braces of size $p^2q^2$}

Let $p$ and $q$ be odd primes  with $p<q$. By the Sylow's theorems, a group of order $p^2q^2$ has a normal subgroup of order $q^2$ and a subgroup of order $p^2$. In \cite{C2} and \cite{C3} we classified skew braces of abelian type of size $p^2q^2$. In this section we determine skew braces of size $p^2q^2$ of non-abelian type such that the subgroup of order $q^2$ of their additive group is cyclic. To obtain a non trivial action of the order $p^2$ subgroup on the order $q^2$ subgroup, $p$ and $q$ must satisfy $p \mid q-1$.
The numbers $m=q^2$ and $n=p^2$ satisfy the conditions in Theorem \ref{prop7}. Hence, every skew brace of size $p^2q^2$ may be obtained from a brace $B_1$ of size $q^2$, a brace $B_2$ of size $p^2$, group morphisms $\sigma:(B_2,\cdot) \to \Aut(B_1,\cdot)$ and $\tau:(B_2,\circ) \to \Aut(B_1,\cdot)$  and a map $\alpha$ from $B_1$ to $B_1$ satisfying the conditions in Theorem \ref{prop7}. We use the results on braces of order $p^2$ given in Section \ref{Bach} and Theorem \ref{prop7} to determine all skew braces of non-abelian type of size $p^2q^2$, such that the subgroup of order $q^2$ of their additive group is cyclic.

In this section, we denote by $+$ the additive law of $B_1$ and $B_2$.

\subsection{$(B_1,+)=\Z/(q^2)$ and $(B_2,+)=\Z/(p^2)$}

In this case, if $p^2 \nmid q-1$, $\sigma:\Z/(p^2) \to (\Z/(q^2))^*, 1 \mapsto u$, for $u$ a fixed element of order $p$ in $(\Z/(q^2))^*$. If $p^2 \mid q-1$, we can also have $\sigma':\Z/(p^2) \to (\Z/(q^2))^*, 1 \mapsto u'$, for $u'$ a fixed element of order $p^2$ in $(\Z/(q^2))^*$. Since $\Aut B_1$ is abelian, \eqref{mor} reduces to $\sigma \psi=\sigma$ and \eqref{eq10} reduces to $\sigma(b \lambda_b(c) b^{-1}) = \sigma(c)$ and analogously for $\sigma'$. We determine $(\varphi,\psi)$ satisfying \eqref{mor}. We obtain that $\varphi$ is any element in $\Aut B_1=\Aut N_1$. We have then one equivalence class for $B_1$ under the relation defined in Proposition \ref{class}. When the action is given by $\sigma$, $\psi \in \{ k \in (\Z/(p^2))^* \, : \, k\equiv 1 \pmod{p}\}$ and, when it is given by $\sigma'$, $\psi = 1_{\Z/(p^2)}$.

\subsubsection{$B_1$ trivial brace}\label{B1t}

 From Examples \ref{exa} we know that $\alpha$ satisfying \eqref{alpha0}, \eqref{alpha1} and \eqref{alpha2} is given by either $\alpha_a=0$, for all $a$, or $\alpha_a=-a$.

\vspace{0.2cm}
\noindent
{\bf If $B_2$ is trivial,} we have one brace structure for $B_2$. Now \eqref{eq10} is satisfied for any $\sigma$ or $\sigma'$ as above and any $\tau:(B_2,\circ) \to \Aut (B_1,\cdot)$.

We assume first $p^2 \nmid q-1$. We may write $\tau(1)=u^{\ell}$, for some $\ell, 0 \leq \ell \leq p-1$. Since $\psi \in \{ k \in (\Z/(p^2))^* \, : \, k\equiv 1 \pmod{p}\}$, every $\tau$ stands alone in its equivalence class under \eqref{taus}. We obtain $2p$ braces.

We assume now $p^2 \mid q-1$ and that the action of $(B_2,\cdot)$ on $(B_1,\cdot)$ is given by $\sigma$. We may write $\tau_{\ell}(1)=u'^{\ell}$, for some $\ell, 0 \leq \ell \leq p^2-1$. Since $\psi \in \{ k \in (\Z/(p^2))^* \, : \, k\equiv 1 \pmod{p}\}$, $\tau_{\ell}$, with $p\mid \ell$  stands alone in its equivalence class under \eqref{taus} while the $\tau_{\ell}$, with $p\nmid \ell$, are grouped in $p-1$ classes. We have then $2p-1$ classes of $\tau$'s. We obtain $2(2p-1)$ braces.

Finally we assume $p^2 \mid q-1$ and that the action of $(B_2,\cdot)$ on $(B_1,\cdot)$ is given by $\sigma'$. We may write $\tau(1)=u'^{\ell}$, for some $\ell, 0 \leq \ell \leq p^2-1$. Since $\psi=1_{\Z/(p^2)}$, every $\tau$ stands alone in its  class under \eqref{taus}.  We obtain $2p^2$ braces.

\vspace{0.2cm}
\noindent
{\bf If $B_2$ is nontrivial,} $\Aut B_2 = \{ k \in (\Z/(p^2))^* \, : \, k\equiv 1 \pmod{p}\}$. We have then $p-1$ classes for $B_2$ under the relation given in Proposition \ref{class}. Now $\lambda_b(c)=c+pbc$ and \eqref{eq10} gives $\sigma(c+pbc) = \sigma(c)$, which is fulfilled, whereas $\sigma'(c+pbc) = \sigma'(c)$ is not.

We assume first $p^2 \nmid q-1$. Again we may write $\tau(1)=u^{\ell}$, for some $\ell, 0 \leq \ell \leq p-1$. As above, since, in \eqref{taus}, $\psi \in \Aut(B_2,\cdot,\circ)=\{ k \in (\Z/(p^2))^* \, : \, k \equiv 1 \pmod{p} \}$, every $\tau$ stands alone in its conjugation class. We obtain then $2p(p-1)$ braces.

We assume now $p^2 \mid q-1$. Since $\sigma'$ does not satisfy \eqref{eq10}, the action of $(B_2,\cdot)$ on $(B_1,\cdot)$ is given by $\sigma$. As above we may write $\tau(1)=u'^{\ell}$, for some $\ell, 0 \leq \ell \leq p^2-1$  and, since $\psi \in \{ k \in (\Z/(p^2))^* \, : \, k\equiv 1 \pmod{p}\}$, we have $2p-1$ classes of $\tau$'s under \eqref{taus}. We obtain $2(2p-1)(p-1)$ braces.

\subsubsection{$B_1$ nontrivial brace}\label{4.1.1}

Since $B_1$ is nontrivial, $\lambda_a=1+qa$, for $a \in B_1$.

\vspace{0.2cm}
\noindent
{\bf If $B_2$ is trivial,} we have one brace structure for $B_2$. We determine now $\alpha:B_1 \to B_1$ satisfying \eqref{alpha0}, \eqref{alpha1} and \eqref{alpha2}. Note that, applying successively \eqref{alpha0}, we obtain $\alpha(1\circ \overset{k}{\cdots} \circ 1)=(k+qk(k-1)/2)\alpha_1$ and, for $a \in B_1$, we have $a=1\circ \overset{\tilde{a}}{\cdots} \circ 1$, where $\tilde{a}=a-qa(a-1)/2$. Hence \eqref{alpha0} implies $\alpha_a=a \alpha_1$, where the product is the ring one in $\Z/(q^2)$, i.e. $\alpha \in \End(B_1,+)$. Next we obtain that \eqref{alpha1} is satisfied for $\alpha_1=0$ and $\alpha_1=-1$, both for $\sigma$ and for $\sigma'$. Now, if $\alpha_1=0$, \eqref{alpha2} is satisfied for $\tau(b)=1$, for all $b \in N_2$ and, if $\alpha_1=-1$, \eqref{alpha2} is satisfied for $\tau=\sigma$ or $\tau=\sigma'$, respectively. We obtain then two pairs $(\alpha,\tau)$ and then $2$ braces both for $\sigma$ and for $\sigma'$.

\vspace{0.2cm}
\noindent
{\bf If $B_2$ is nontrivial,} since $\psi \in \{ k \in (\Z/(p^2))^* \, : \, k\equiv 1 \pmod{p}\}$, we have $p-1$ brace structure classes for $B_2$. We have $\lambda_b(c)=c+pbc$. As in Section \ref{B1t}, \eqref{eq10} is fulfilled for $\sigma$ and not for $\sigma'$. As above, we obtain two possible pairs $(\alpha,\tau)$. We obtain then $2(p-1)$ braces when the action of $(B_2,\cdot)$ on $(B_1,\cdot)$ is given by $\sigma$ and 0 when it is given by $\sigma'$.

\vspace{0.3cm}
We have obtained the following result.

\begin{proposition}
Let $p$ and $q$ be odd prime numbers such that $p\mid q-1$. Let $u$ denote a fixed element of order $p$ in $(\Z/(q^2))^*$. If $p^2\mid q-1$, let $u'$ denote a fixed element of order $p^2$ in $(\Z/(q^2))^*$.

\noindent
1) On $\Z/(q^2)\times \Z/(p^2)$ we define the operation

$$(a,b)\cdot (a',b') = (a+u^b a',b+b'), \, \text{\ for \ }a,a' \in \Z/(q^2), b,b' \in \Z/(p^2).$$

\noindent
a) We assume $p^2\nmid q-1$. Then there are $2p^2+2p$ skew braces, up to isomorphism, with additive group $(\Z/(q^2)\times \Z/(p^2),\cdot)$ whose multiplicative laws are given by

$$
\begin{array}{l}
(a,b) \circ (a',b')= (a+u^{\ell b} a',b+b'), \, 0\leq \ell\leq p-1; \\
(a,b) \circ (a',b')= (a+u^{\ell kb} a',b+b'+kpbb'), \, 0\leq \ell\leq p-1, 1\leq k \leq p-1; \\
(a,b) \circ (a',b')= (u^{b'}a+u^{\ell b}a',b+b'), \, 0\leq \ell\leq p-1;\\
(a,b) \circ (a',b')= (u^{b'}a+u^{\ell kb}a',b+b'+kpbb'), \, 0\leq \ell\leq p-1, 1\leq k \leq p-1;\\
(a,b) \circ (a',b')= (a+a'+qaa',b+b'+kpbb'), \, 0\leq k \leq p-1; \\
(a,b) \circ (a',b')= (u^{b'}a+u^ba'+qaa',b+b');\\
(a,b) \circ (a',b')= (u^{b'}a+u^{kb}a'+qaa',b+b'+kpbb'), \, 1\leq k \leq p-1.\\
\end{array}
$$

\noindent
b) We assume $p^2\mid q-1$. Then there are $4p^2$ skew braces, up to isomorphism, with additive group $(\Z/(q^2)\times \Z/(p^2),\cdot)$. These are the braces given in a) and besides those whose multiplicative laws are given by

$$
\begin{array}{l}
(a,b) \circ (a',b')= (a+u'^{\ell b} a',b+b'), \, 1\leq \ell\leq p-1; \\
(a,b) \circ (a',b')= (a+u'^{\ell (kb-pkb(kb-1)/2)} a',b+b'+kpbb'), \, 1\leq \ell\leq p-1, 1\leq k \leq p-1; \\
(a,b) \circ (a',b')= (u^{b'}a+u'^{\ell b}a',b+b'), \, 1\leq \ell\leq p-1;\\
(a,b) \circ (a',b')= (u^{b'}a+u'^{\ell (kb-pkb(kb-1)/2)}a',b+b'+kpbb'), \, 1\leq \ell\leq p-1, 1\leq k \leq p-1.
\end{array}
$$

\noindent
2) If $p^2\mid q-1$, on $\Z/(q^2)\times \Z/(p^2)$ we define the operation

$$(a,b)\cdot (a',b') = (a+u'^b a',b+b'), \, \text{\ for \ }a,a' \in \Z/(q^2), b,b' \in \Z/(p^2).$$

\noindent
Then there are $2p^2+2$ skew braces, up to isomorphism, with additive group $(\Z/(q^2)\times \Z/(p^2),\cdot)$ whose multiplicative laws are given by

$$
\begin{array}{l}
(a,b) \circ (a',b')= (a+u'^{\ell b} a',b+b'), \, 0\leq \ell\leq p^2-1; \\
(a,b) \circ (a',b')= (u'^{b'}a+u'^{\ell b}a',b+b'), \, 0\leq \ell\leq p^2-1;\\
(a,b) \circ (a',b')= (a+a'+qaa',b+b'); \\
(a,b) \circ (a',b')= (u'^{b'}a+u'^{b}a'+qaa',b+b').\\
\end{array}
$$

In the description of the multiplicative laws we have taken into account the isomorphism from $B_k$ into $\Z/(p^2)$ given in Section \ref{cyclic}, when appropriate.
\end{proposition}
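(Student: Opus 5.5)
The plan is to assemble the counts of Sections \ref{B1t} and \ref{4.1.1} via Theorem \ref{prop7} and Proposition \ref{class}. Then, for each surviving quadruple $(B_1,B_2,\tau,\alpha)$, we write out the multiplicative law of the regular subgroup $G=\widetilde{G}_1\rtimes\widetilde{G}_2$ explicitly. By Theorem \ref{prop7}, every skew brace with additive group $\Z/(q^2)\rtimes_\sigma\Z/(p^2)$ (or $\rtimes_{\sigma'}$) arises from such a quadruple, and isomorphism classes correspond to orbits under the relation of Proposition \ref{class}. These orbits have already been computed case by case: $B_1$ trivial or not, $B_2$ trivial or not, and $p^2\nmid q-1$ or $p^2\mid q-1$ with action $\sigma$ or $\sigma'$.

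\textbf{Counting.} In case 1a), with $\sigma$ and $p^2\nmid q-1$, the contributions are as follows.
\begin{itemize}
\item $B_1$ trivial, $B_2$ trivial: $2p$ braces.
\item $B_1$ trivial, $B_2$ nontrivial: $2p(p-1)$ braces.
\item $B_1$ nontrivial, $B_2$ trivial: $2$ braces.
\item $B_1$ nontrivial, $B_2$ nontrivial: $2(p-1)$ braces.
\end{itemize}
The total is $2p^2+2p$. In case 1b), with action $\sigma$ and $p^2\mid q-1$, the classes of $\tau$ with $p\mid\ell$ reproduce the braces of a). The extra $\tau$'s, namely $\ell\not\equiv 0\pmod p$ grouped into $p-1$ classes, contribute $2(p-1)$ braces for $B_2$ trivial and $2(p-1)^2$ for $B_2$ nontrivial. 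The total is $2p^2+2p+2(p-1)p=4p^2$. In case 2), with action $\sigma'$, there are $2p^2$ braces with $B_1$ trivial, $2$ with $B_1$ nontrivial, and none with $B_2$ nontrivial. The total is $2p^2+2$.

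\textbf{Explicit laws.} Take $a=\tau(b)(a_0)$, so that by step 4) of the proof of Theorem \ref{prop7} the element of $G$ over $(a,b)$ has automorphism component $(\sigma(b)^{-1}\tau(b)(\alpha_{a_0}),\sigma(b)^{-1}\tau(b)\lambda_{a_0},\lambda_b)$. Applying it to $(a',b')$ via \eqref{im} and multiplying by $(a,b)$ in $N$ gives $(a,b)\circ(a',b')$.
\begin{itemize}
\item When $\alpha=0$ and $B_1$ is trivial, this gives $(a+\tau(b)a',\,b\circ b')$. For $\tau(1)=u^\ell$ and $B_2=B_k$, one writes $\tau(b)=u^{\ell\tilde b}$, where $b\mapsto\tilde b$ is the isomorphism $(B_k,\circ_k)\to\Z/(p^2)$ of Section \ref{cyclic}. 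Modulo $p$ this reduces to $u^{\ell kb}$ since $u$ has order $p$; for $u'$ one keeps the full expression $kb-pkb(kb-1)/2$.
\item When $\alpha_a=-a$, the conjugation term $\beta$-part contributes $\alpha\,\sigma(b')(\alpha^{-1})$. Since $\alpha_{a_0}=-a_0$, this yields the factor $u^{b'}a$ (resp.\ $u'^{b'}a$), giving $(u^{b'}a+\tau(b)a',\dots)$.
\item For $B_1$ nontrivial one adds $qaa'$, using that $\tau(b)=1$ when $\alpha=0$ and $\tau=\sigma$ when $\alpha_1=-1$.
\end{itemize}
One should then check that the parameters listed in the statement are exactly a transversal of the classes found. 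For example, $\ell$ runs over $1,\dots,p-1$ in b) because representatives with $p\mid\ell$ already occur in a), and $u'^{p\ell}$ is a power of $u$.

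The main obstacle is this explicit translation of $G$ into a formula for $\circ$. The key step is the correct bookkeeping of the inner-automorphism part $\alpha$ through \eqref{im}, together with the reparametrization of $\tau$ through the isomorphism $B_k\to\Z/(p^2)$. It must be checked that each displayed law actually corresponds to the stated class representative, e.g.\ by verifying associativity and the brace relation directly on one or two representative laws.
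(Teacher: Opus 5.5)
Your strategy is the paper's own: you read off the orbit counts from the case analysis via Theorem \ref{prop7} and Proposition \ref{class}, then translate each class into a law. Your totals $2p^2+2p$, $4p^2$ and $2p^2+2$ are right. The gap is in the step you defer, checking that each displayed law corresponds to its class representative: for the seventh family in 1) a) that check fails.

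When $B_1$ is nontrivial and $\alpha_a=-a$, one has $A_b=\sigma(b)^{-1}\tau(b)a$. So \eqref{alpha2} reads $1+q\,\sigma(b)^{-1}\tau(b)a=1+qa$ for all $a$. Thus $\sigma(b)^{-1}\tau(b)\equiv 1\pmod q$, and since it has $p$-power order it equals $1$. Hence $\tau=\sigma$ pointwise on the set $N_2$, and you may not reparametrize through the isomorphism $B_k\to\Z/(p^2)$; that freedom exists only in the families where $\tau$ is a free parameter. Your recipe then gives
$$(a,b)\circ(a',b')=(u^{b'}a+u^{b}a'+qaa',\,b+b'+kpbb'),$$
not the displayed $u^{kb}a'$.

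The displayed law is in fact not associative for $k\neq 1$. Expanding, the coefficient of $aa''$ is $qu^{b'}$ in $((a,b)\circ(a',b'))\circ(a'',b'')$ but $qu^{kb'}$ in $(a,b)\circ((a',b')\circ(a'',b''))$. These differ because $u^{k-1}\not\equiv 1\pmod q$, as $u$ still has order $p$ modulo $q$. Concretely, take $p=3$, $q=7$, $u=30$, $k=2$ and $x=z=(1,0)$, $y=(0,1)$: then $(x\circ y)\circ z=(13,1)$ while $x\circ(y\circ z)=(27,1)$.

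So, carried out fully, your argument proves the proposition with that family corrected to $u^{b}a'$. The sixth and seventh lines then merge into a single family with $0\le k\le p-1$, parallel to the fifth line and consistent with the paper's pair $(\alpha_1,\tau)=(-1,\sigma)$. The count is unaffected, but you should flag the correction rather than assert agreement with the display.

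A smaller point: for $B_2=B_k$ the term $u^{b'}a$ first comes out as $\sigma(\lambda_b(b'))a$. You need \eqref{eq10}, i.e.\ $\sigma(b'+pbb')=\sigma(b')$, to simplify it.
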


\subsection{$(B_1,+)=\Z/(q^2)$ and $(B_2,+)=\Z/(p)\times \Z/(p)$}\label{4.2}

In this case $\sigma:\Z/(p)\times \Z/(p) \to (\Z/(q^2))^*$.  Note that, since $\Aut(B_1,+)$ is abelian, any $\sigma$ satisfies \eqref{eq9}; \eqref{mor} reduces to $\sigma \psi=\sigma$ and \eqref{eq10} reduces to $\sigma(b \lambda_b(c) b^{-1}) = \sigma(c)$. For this last condition to be met when $B_2$ is not trivial, we take $\sigma$ defined by $\left(\begin{smallmatrix} 1 \\ 0 \end{smallmatrix}\right) \mapsto 1, \left(\begin{smallmatrix} 0 \\ 1 \end{smallmatrix}\right) \mapsto u$,  for $u$ a fixed element of order $p$ in $(\Z/(q^2))^*$. We determine $(\varphi,\psi)$ satisfying \eqref{mor}. We obtain that $\varphi$ is any element in $\Aut N_1$ and $\psi \in \{ \left(\begin{smallmatrix} a&b\\c&d \end{smallmatrix} \right) \in \GL(2,p) \, : \, c=0,d=1\}$. We have one isomorphism class for $B_1$.

\subsubsection{$B_1$ trivial brace}\label{triv}

\vspace{0.2cm}
\noindent
{\bf If $B_2$ is trivial,} there is one isomorphism class for $B_2$. Next \eqref{eq10} is satisfied for any $\tau:(B_2,\circ) \to \Aut(B_1,\cdot)$. Such a $\tau$ is determined by  $\left(\begin{smallmatrix} 1 \\ 0 \end{smallmatrix}\right) \mapsto u^k, \left(\begin{smallmatrix} 0 \\ 1 \end{smallmatrix}\right) \mapsto u^{\ell}$, for some $k,\ell \in \{0,\cdots, p-1\}$. Since $(B_2,\circ)=(B_2,+)$, we have $\tau\left(\begin{smallmatrix} x_1 \\ x_2 \end{smallmatrix}\right)=u^{kx_1+\ell x_2}$. Modulo \eqref{taus} with $\psi=\left(\begin{smallmatrix} a&b\\0&1\end{smallmatrix} \right), a \in (\Z/p\Z)^*, c \in \Z/p\Z$, we obtain $p+1$ classes, which we describe by giving the corresponding pair $(k,\ell)$. These are $[(1,0)]=\{(k,\ell) \, : \, k\neq 0\},[(0,\ell)]=\{(0,\ell)\},0\leq \ell \leq p-1$. Since $B_1$ is trivial, \eqref{alpha0} is equivalent to $\alpha\in \End(B_1,+)$. We have then $\alpha_a=a\alpha_1$ and we obtain two possible values for $\alpha_1$, namely $\alpha_1=0$ and $\alpha_1=-1$ (see Example \ref{exa} 2)). We obtain then $2(p+1)$ braces.

\vspace{0.2cm}
\noindent
{\bf If $B_2$ is nontrivial,} $\Aut B_2 = \{ \left( \begin{smallmatrix} d^2 & b \\ 0 & d \end{smallmatrix} \right) \, : \, b \in \Z/(p), d \in (\Z/(p))^*\}$. We determine first the nontrivial brace structures on $N_2$ satisfying \eqref{eq10} and their behaviour under the automorphisms of $N_2$ allowed by \eqref{mor}.

\begin{lemma}\label{lemB2}
Let $N_2=\Z/(p)\times \Z/(p)$, let $v=\left(\begin{smallmatrix} 1\\ 0\end{smallmatrix}\right)$ and let $\sigma:\Z/(p)\times \Z/(p) \to (\Z/(q^2))^*$ be defined as at the beginning of Section \ref{4.2}, so that $\Ker \sigma=\langle v \rangle$. For $s \in (\Z/(p))^*$, let $\circ_s$ be the operation on $N_2$ defined by

$$\left( \begin{matrix} x_1\\ x_2 \end{matrix} \right) \circ_s \left(\begin{matrix} y_1 \\ y_2 \end{matrix} \right)=\left(\begin{matrix} x_1+y_1+s\, x_2y_2\\ x_2+y_2\end{matrix} \right).$$

\noindent
Then

\begin{enumerate}
\item[1)] the nontrivial brace structures on $N_2$ satisfying \eqref{eq10} are exactly the $p-1$ braces $(N_2,+,\circ_s), s \in (\Z/(p))^*$;
\item[2)] for $a \in (\Z/(p))^*, b \in \Z/(p)$, the automorphism $\psi=\left(\begin{smallmatrix} a&b\\0&1\end{smallmatrix} \right)$ of $(N_2,+)$ is an isomorphism of braces from $(N_2,+,\circ_s)$ onto $(N_2,+,\circ_{as})$.
\end{enumerate}

\noindent
In particular, the braces in 1) are permuted transitively by the automorphisms $\psi$ of $N_2$ allowed by \eqref{mor} and, for a fixed $s$, those preserving $\circ_s$ are exactly the ones with $a=1$.
\end{lemma}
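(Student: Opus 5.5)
Since $(\Z/(q^2))^*$ is abelian, condition \eqref{eq10} reduces to $\sigma(b+\lambda_b(c)-b)=\sigma(\lambda_b(c))=\sigma(c)$ for all $b,c\in N_2$. So $\lambda_b(c)-c\in\Ker\sigma=\langle v\rangle$ for all $b,c$. The plan is to use the description of nontrivial braces on $N_2$ from Section \ref{elem}: each is $a\circ_M b=a+b+\theta(a)\theta(b)\,w$, with $\theta$ a nonzero linear form and $w$ a nonzero vector of $\Ker\theta$. For such a brace, $\lambda_b(c)=c+\theta(b)\theta(c)w$.

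For 1), I choose $b,c$ with $\theta(b)=\theta(c)=1$. Condition \eqref{eq10} then forces $w\in\langle v\rangle$. Since $w\in\Ker\theta$, we get $\Ker\theta=\langle v\rangle$, so $\theta(x)=\mu x_2$ for some $\mu\neq 0$, and $w=\nu v$ with $\nu\neq0$. Hence $x\circ y=x+y+\mu^2\nu\,x_2y_2\,v$, which is $\circ_s$ with $s=\mu^2\nu$. Conversely, every $\circ_s$ arises in this way: take $\theta(x)=x_2$ and $w=sv$, a valid pair since $w\in\Ker\theta$. For $\circ_s$ we have $\lambda_b(c)-c=s\,b_2c_2\,v\in\Ker\sigma$, so \eqref{eq10} holds. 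The values $s\in(\Z/(p))^*$ give $p-1$ distinct operations, since $s$ can be read off the first coordinate of $x\circ_s y$ at $x=y=\left(\begin{smallmatrix}0\\1\end{smallmatrix}\right)$.

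For 2), I check directly that $\psi(x\circ_s y)=\psi(x)\circ_{as}\psi(y)$. We have $\psi(x)=\left(\begin{smallmatrix} ax_1+bx_2\\ x_2\end{smallmatrix}\right)$. The left side has first coordinate $a(x_1+y_1+sx_2y_2)+b(x_2+y_2)$ and second coordinate $x_2+y_2$. The right side has first coordinate $ax_1+bx_2+ay_1+by_2+as\,x_2y_2$ and the same second coordinate. The two sides agree, and $\psi$ is additive and bijective, so it is a brace isomorphism.

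The final remarks follow from 2) together with the description of the automorphisms allowed by \eqref{mor}, namely $\psi=\left(\begin{smallmatrix} a&b\\0&1\end{smallmatrix}\right)$. These map $\circ_s$ to $\circ_{as}$, so $a=s'/s$ realizes transitivity. Since the operations $\circ_s$ are pairwise distinct, $\psi$ preserves $\circ_s$ exactly when $as=s$, that is, when $a=1$. The only real obstacle is step 1): confirming that $\lambda_b(c)=c+\theta(b)\theta(c)w$ for $\circ_M$ and that \eqref{eq10} pins down $\Ker\theta$. The rest is direct verification.
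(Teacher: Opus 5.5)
Your proof is correct and is essentially the paper's argument: both use the $\circ_M$ parametrization with $\lambda_x(y)=y+\theta(x)\theta(y)w$ to reduce \eqref{eq10} to $w\in\Ker\sigma$, hence $\Ker\theta=\langle v\rangle$ and $\circ_M=\circ_s$. Both also verify 2) by the same direct coordinate computation. The one step you leave implicit is that the pair $\theta(x)=x_2$, $w=sv$ is realized by some $M$; this is settled by taking $M=\left(\begin{smallmatrix} s^{-1}&0\\0&1\end{smallmatrix}\right)$, exactly as the paper does.
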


\begin{proof}
1) For the multiplicative law $\circ_M$ defined in Section \ref{elem}, we have $\lambda_x(y)=-x+x\circ_M y=y+\theta(x)\theta(y)w$. Since $(B_2,\cdot)$ is abelian, \eqref{eq10} reads $\sigma(\lambda_x(y))=\sigma(y)$, for all $x,y \in N_2$, i.e. $\sigma(\theta(x)\theta(y)w)=1$, for all $x,y \in N_2$, i.e. $w \in \Ker \sigma$. Since $w\neq 0$ spans $\Ker \theta$ and $\Ker \sigma=\langle v\rangle$, we obtain $\Ker \theta=\langle v \rangle$, hence $\theta(x)=\delta x_2$ and $w=t\, v$, for some $\delta, t \in (\Z/(p))^*$, and then $\circ_M=\circ_s$, for $s=\delta^2t$. Conversely, for $s \in (\Z/(p))^*$, we have $\circ_s=\circ_M$ with $M=\left(\begin{smallmatrix} s^{-1}&0\\0&1\end{smallmatrix} \right)$, and $\circ_s$ satisfies \eqref{eq10}, since in this case $w=s\, v \in \Ker \sigma$. Clearly $\circ_s\neq \circ_{s'}$, for $s \neq s'$.

\noindent
2) We have $\psi^{-1}(x)=\left(\begin{smallmatrix} a^{-1}(x_1-bx_2)\\ x_2\end{smallmatrix}\right)$ and hence

\begin{align*}
\psi\left(\psi^{-1}(x) \circ_s \psi^{-1}(y)\right)&=\left(\begin{matrix} (x_1-bx_2)+(y_1-by_2)+as\, x_2y_2+b(x_2+y_2)\\ x_2+y_2\end{matrix} \right)\\
&=\left(\begin{matrix} x_1+y_1+as\, x_2y_2\\ x_2+y_2\end{matrix} \right)=x \circ_{as} y.
\end{align*}

\noindent
Since $\psi \in \Aut (N_2,+)$, this means precisely that $\psi$ is an isomorphism of braces from $(N_2,+,\circ_s)$ onto $(N_2,+,\circ_{as})$. The last assertions follow, since $a$ runs over $(\Z/(p))^*$ and $as=s$ if and only if $a=1$.
\end{proof}

\noindent
By Lemma \ref{lemB2}, there is one brace structure class for $B_2$ and we may take $B_2=(N_2,+,\circ_1)$; the automorphisms $\psi$ at our disposal in \eqref{taus} are then exactly those preserving $\circ_1$, i.e. those with $a=1$. In this case, $\tau:(B_2,\circ) \to \Aut(B_1,\cdot)$ is as well determined by  $\left(\begin{smallmatrix} 1 \\ 0 \end{smallmatrix}\right) \mapsto u^k, \left(\begin{smallmatrix} 0 \\ 1 \end{smallmatrix}\right) \mapsto u^{\ell}$, for some $k,\ell \in \{0,\cdots, p-1\}$ but in this case we have $\tau\left(\begin{smallmatrix} x_1 \\ 0 \end{smallmatrix}\right)=u^{kx_1}$ and, for $x_2\neq 0$, $\tau\left(\begin{smallmatrix} x_1 \\ x_2 \end{smallmatrix}\right)=u^{k(x_1-x_2(x_2-1)/2)+\ell x_2}$, since $\left(\begin{smallmatrix} x_1 \\ x_2 \end{smallmatrix}\right)=\left(\begin{smallmatrix} 1 \\ 0 \end{smallmatrix}\right) \circ \overset{z}{\dots} \circ \left(\begin{smallmatrix} 1 \\ 0 \end{smallmatrix}\right)\circ \left(\begin{smallmatrix} 0 \\ 1 \end{smallmatrix}\right)\circ \overset{x_2}{\dots} \circ \left(\begin{smallmatrix} 0 \\ 1 \end{smallmatrix}\right)$, with $z=x_1-x_2(x_2-1)/2$. Modulo \eqref{taus} with $\psi=\left(\begin{smallmatrix} 1&b\\0&1\end{smallmatrix} \right), b \in \Z/p\Z$, which gives $(k,\ell)\mapsto (k,\ell-kb)$, we obtain $2p-1$ classes, which we describe by giving the corresponding pair $(k,\ell)$. These are $[(k,0)]=\{(k,\ell) \, : \, 0\leq \ell \leq p-1\},1\leq k\leq p-1;[(0,\ell)]=\{(0,\ell)\},0\leq \ell \leq p-1$. As above, there are two possible solutions for $\alpha$. We obtain then $2(2p-1)$ braces.

\subsubsection{$B_1$ nontrivial brace}

Since $B_1$ is nontrivial, $\lambda_a=1+qa$, for $a \in B_1$.

\vspace{0.2cm}
\noindent
{\bf If $B_2$ is trivial,} as in Section \ref{triv}, there is one brace structure class for $B_2$. As in Section \ref{4.1.1}, we obtain that $\alpha_a=a\alpha_1$, for $a \in N_1$ and two possible pairs $(\alpha_1,\tau)$, namely $(0,1)$ and $(-1,\sigma)$. We have then 2 braces.

\vspace{0.2cm}
\noindent
{\bf If $B_2$ is nontrivial,} by Lemma \ref{lemB2}, there is one brace structure class for $B_2$. Again, there are  two possible pairs $(\alpha_1,\tau)$ and hence 2 braces.

\vspace{0.3cm}
We have obtained the following result.

\begin{proposition}
Let $p$ and $q$ be odd prime numbers such that $p\mid q-1$. Let $u$ denote a fixed element of order $p$ in $(\Z/(q^2))^*$.

\noindent
1) On $\Z/(q^2)\times (\Z/(p)\times \Z/(p))$ we define the operation

$$\left(a,\left( \begin{smallmatrix} b_1\\ b_2 \end{smallmatrix} \right)\right)\cdot \left(a',\left( \begin{smallmatrix} b_1'\\ b_2' \end{smallmatrix} \right)\right) = (a+u^{b_2} a',\left( \begin{smallmatrix} b_1+b_1'\\ b_2+b_2' \end{smallmatrix} \right)),$$

\noindent
for $a,a' \in \Z/(q^2), \left( \begin{smallmatrix} b_1\\ b_2 \end{smallmatrix} \right),\left( \begin{smallmatrix} b_1'\\ b_2'\end{smallmatrix} \right) \in \Z/(p)\times \Z/(p)$. There are $6p+4$ skew braces, up to isomorphism, with additive group $(\Z/(q^2)\times (\Z/(p)\times \Z/(p)),\cdot)$ whose multiplicative laws are given by

$$
\begin{array}{l}
\left(a,\left( \begin{smallmatrix} b_1\\ b_2 \end{smallmatrix} \right)\right)\circ \left(a',\left( \begin{smallmatrix} b_1'\\ b_2' \end{smallmatrix} \right)\right) = \left( a+u^{b_1}a', \left( \begin{smallmatrix} b_1+b_1'\\ b_2+b_2' \end{smallmatrix} \right)\right);\\[10pt]
\left(a,\left( \begin{smallmatrix} b_1\\ b_2 \end{smallmatrix} \right)\right)\circ \left(a',\left( \begin{smallmatrix} b_1'\\ b_2' \end{smallmatrix} \right)\right) = \left( u^{b_2'}a+u^{b_1}a', \left( \begin{smallmatrix} b_1+b_1'\\ b_2+b_2' \end{smallmatrix} \right)\right);\\[10pt]
\left(a,\left( \begin{smallmatrix} b_1\\ b_2 \end{smallmatrix} \right)\right)\circ \left(a',\left( \begin{smallmatrix} b_1'\\ b_2' \end{smallmatrix} \right)\right) =\left( a+u^{\ell b_2}a', \left( \begin{smallmatrix} b_1+b_1'\\ b_2+b_2' \end{smallmatrix} \right)\right), \, 0\leq \ell \leq p-1; \\[10pt]
\left(a,\left( \begin{smallmatrix} b_1\\ b_2 \end{smallmatrix} \right)\right)\circ \left(a',\left( \begin{smallmatrix} b_1'\\ b_2' \end{smallmatrix} \right)\right) = \left( u^{b_2'}a+u^{\ell b_2}a', \left( \begin{smallmatrix} b_1+b_1'\\ b_2+b_2' \end{smallmatrix} \right)\right), \, 0\leq \ell \leq p-1;\\[10pt]
\left(a,\left( \begin{smallmatrix} b_1\\ b_2 \end{smallmatrix} \right)\right)\circ \left(a',\left( \begin{smallmatrix} b_1'\\ b_2' \end{smallmatrix} \right)\right) = \left(a+u^{k(b_1-b_2(b_2-1)/2)}a', \left( \begin{smallmatrix} b_1+b_1'+b_2b_2'\\ b_2+b_2' \end{smallmatrix} \right)\right), \, 1\leq k \leq p-1;\\[10pt]
\left(a,\left( \begin{smallmatrix} b_1\\ b_2 \end{smallmatrix} \right)\right)\circ \left(a',\left( \begin{smallmatrix} b_1'\\ b_2'
\end{smallmatrix} \right)\right) = \left(u^{b_2'}a+u^{k(b_1-b_2(b_2-1)/2)}a', \left( \begin{smallmatrix} b_1+b_1'+b_2b_2'\\ b_2+b_2' \end{smallmatrix} \right)\right), \, 1\leq k \leq p-1;\\[10pt]
\left(a,\left( \begin{smallmatrix} b_1\\ b_2 \end{smallmatrix} \right)\right)\circ \left(a',\left( \begin{smallmatrix} b_1'\\ b_2' \end{smallmatrix} \right)\right) = \left(a+u^{\ell b_2}a', \left( \begin{smallmatrix} b_1+b_1'+b_2b_2'\\ b_2+b_2' \end{smallmatrix} \right)\right), \, 0\leq \ell \leq p-1;\\[10pt]
\left(a,\left( \begin{smallmatrix} b_1\\ b_2 \end{smallmatrix} \right)\right)\circ \left(a',\left( \begin{smallmatrix} b_1'\\ b_2'
\end{smallmatrix} \right)\right) = \left(u^{b_2'}a+u^{\ell b_2}a', \left( \begin{smallmatrix} b_1+b_1'+b_2b_2'\\ b_2+b_2' \end{smallmatrix} \right)\right), \, 0\leq \ell \leq p-1;\\[10pt]
\left(a,\left( \begin{smallmatrix} b_1\\ b_2 \end{smallmatrix} \right)\right)\circ \left(a',\left( \begin{smallmatrix} b_1'\\ b_2'
\end{smallmatrix} \right)\right) = \left(a+a'+qaa', \left( \begin{smallmatrix} b_1+b_1'\\ b_2+b_2' \end{smallmatrix} \right)\right);\\[10pt]
\left(a,\left( \begin{smallmatrix} b_1\\ b_2 \end{smallmatrix} \right)\right)\circ \left(a',\left( \begin{smallmatrix} b_1'\\ b_2'
\end{smallmatrix} \right)\right) = \left(u^{b_2'}a+u^{ b_2}a'+qaa', \left( \begin{smallmatrix} b_1+b_1'\\ b_2+b_2'\end{smallmatrix} \right)\right);\\[10pt]
\left(a,\left( \begin{smallmatrix} b_1\\ b_2 \end{smallmatrix} \right)\right)\circ \left(a',\left( \begin{smallmatrix} b_1'\\ b_2'
\end{smallmatrix} \right)\right) = \left(a+a'+qaa', \left( \begin{smallmatrix} b_1+b_1'+b_2b_2'\\ b_2+b_2' \end{smallmatrix} \right)\right);\\[10pt]
\left(a,\left( \begin{smallmatrix} b_1\\ b_2 \end{smallmatrix} \right)\right)\circ \left(a',\left( \begin{smallmatrix} b_1'\\ b_2'
\end{smallmatrix} \right)\right) = \left(u^{b_2'}a+u^{ b_2}a'+qaa', \left( \begin{smallmatrix} b_1+b_1'+b_2b_2'\\ b_2+b_2'\end{smallmatrix} \right)\right).\\
\end{array}
$$

\end{proposition}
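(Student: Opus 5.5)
The statement is a summary of the case analysis of Section \ref{4.2}, so the proof consists in turning each $(B_1,B_2,\tau,\alpha)$ class found there into an explicit multiplicative law, and then adding up. By Theorem \ref{prop7}, the regular subgroup attached to a quadruple is
$$G=\{(\tau(b)(a),b,\sigma(b)^{-1}\tau(b)(\alpha_a),\sigma(b)^{-1}\tau(b)\lambda_a,\lambda_b)\}.$$
By Proposition \ref{GV}, the law is $x\circ y=x\cdot f(y)$, where $(x,f)\in G$ is the element with first component $x$. So for $x=(a,b)$ we take $a_0=\tau(b)^{-1}(a)$, and $f$ is the automorphism $(\alpha_{a_0}^*,\varphi,\lambda_b)$, where $\alpha_{a_0}^*=\sigma(b)^{-1}\tau(b)(\alpha_{a_0})$ and $\varphi=\sigma(b)^{-1}\tau(b)\lambda_{a_0}$. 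Applying \eqref{im} to $y=(a',b')$ and then the additive law $(a,b)\cdot(a'',b'')=(a+u^{b_2}a'',b+b'')$ gives the first coordinate. In the additive notation of $\Z/(q^2)$, where all automorphisms are units and commute, it reads
$$a+u^{b_2}\bigl(\alpha^*+\varphi a'-u^{(\lambda_b b')_2}\alpha^*\bigr).$$
The second coordinate is $b\circ b'$ in $B_2$.

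I then specialise this formula case by case.

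For $\alpha=0$, the first coordinate becomes $a+\tau(b)\lambda_{a_0}(a')$.
\begin{itemize}
\item If $B_1$ is trivial, this is $a+\tau(b)a'$. With $B_2$ trivial, the class representatives $(k,\ell)=(1,0)$ and $(0,\ell)$ give the laws with $u^{b_1}$ and $u^{\ell b_2}$.
\item With $B_2=(N_2,+,\circ_1)$, the representatives $(k,0)$ and $(0,\ell)$ give the exponents $k(b_1-b_2(b_2-1)/2)$ and $\ell b_2$. This uses the formula for $\tau$ computed before the proposition.
\item If $B_1$ is nontrivial, the pair is $(0,1)$. Then $\tau=1$, so $a_0=a$ and $\lambda_a a'=a'+qaa'$, which gives $a+a'+qaa'$.
\end{itemize}

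For $\alpha_a=-a$, we have $\alpha^*=-\sigma(b)^{-1}\tau(b)a_0=-u^{-b_2}a$. Substituting gives $-u^{(\lambda_b b')_2}\cdot(-u^{-b_2}a)\cdot u^{b_2}=u^{b_2'}a$, since $\lambda_b$ does not change the second coordinate: $w\in\langle v\rangle$ by Lemma \ref{lemB2}. The $a$ terms then collapse to $u^{b_2'}a+\tau(b)\lambda_{a_0}a'$. This yields the paired laws with the factor $u^{b_2'}a$. In the nontrivial-$B_1$ case we have $\tau=\sigma$, and $\tau(b)\lambda_{a_0}a'=u^{b_2}(a'+q a_0a')$. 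One checks that $u^{b_2}qa_0a'=qaa'$, because $u^{b_2}a_0=a$, giving $u^{b_2'}a+u^{b_2}a'+qaa'$.

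Counting the classes gives the total:
\begin{itemize}
\item trivial $B_1$, trivial $B_2$: $2(p+1)$;
\item trivial $B_1$, nontrivial $B_2$: $2(2p-1)$;
\item nontrivial $B_1$, either $B_2$: $2+2$.
\end{itemize}
The sum is $2p+2+4p-2+4=6p+4$, and each listed family matches one of these classes. Distinctness up to isomorphism follows from Proposition \ref{class} and the class computations already made in the section.

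The main obstacle is keeping the bookkeeping of \eqref{im} and the conjugating element $\alpha$ correct, so that the cancellation producing $u^{b_2'}a$ is right. I would verify it on the simplest case first, $B_2$ trivial and $\tau=1$. There one should recover the known law $(u^{b_2'}a+a',b+b')$, which is the opposite-type skew brace.
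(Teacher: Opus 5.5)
Your proposal is correct and takes essentially the paper's route: the proposition summarises the case analysis of Section~\ref{4.2}, and you reuse its class counts $2(p+1)+2(2p-1)+2+2=6p+4$ under Proposition~\ref{class}. Your explicit conversion of each class into a law via $x\circ y=x\cdot f(y)$ and \eqref{im} supplies the step the paper leaves implicit, and it checks out, including the cancellation giving $u^{b_2'}a$ when $\alpha_a=-a$ and the identity $u^{b_2}qa_0a'=qaa'$ when $\tau=\sigma$.
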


\end{document}